\newtheorem{thm}{Theorem}[section]
\newtheorem*{thm*}{Theorem}
\newtheorem{dfn}[thm]{Definition} 
\newtheorem*{dfn*}{Definition}
\newtheorem{cor}[thm]{Corollary}
\newtheorem*{cor*}{Corollary}
\newtheorem{prop}[thm]{Proposition} 
\newtheorem*{prop*}{Proposition} 
\newtheorem*{properties*}{Properties} 
\newtheorem{lem}[thm]{Lemma} 
\newtheorem*{lem*}{Lemma}
\newtheorem*{claim*}{Claim} 
\newtheorem*{fact*}{Fact}
\newtheorem*{qst*}{Question}
\newtheorem*{pb*}{Problem}
\theoremstyle{remark}
\newtheorem*{algo*}{Algorithm} 
\newtheorem*{rem*}{Remark}
\newtheorem{rem}[thm]{Remark}
\newtheorem*{example*}{Example}
\newtheorem{example}[thm]{Example}
\setlist{topsep=0mm,itemsep=1mm,parsep=1mm}
\newcounter{numEnonceTmpInterne}
\newenvironment{enonce*}[1]{\theoremstyle{plain}\stepcounter{numEnonceTmpInterne}%
\def\a{enoncetmp\alph{numEnonceTmpInterne}}%
\newtheorem*{\a}{#1}\begin{\a}}{\end{\a}}
\edef\@tempa#1#2{\def#1{\mathaccent\string"\noexpand\accentclass@#2 }}
\@tempa\rond{017}
\newcommand{\es}{\emptyset}
\renewcommand{\phi}{\varphi} 
\newcommand{\m} {^{-1}} 
\newcommand{\eps} {\varepsilon}
\newcommand {\ra} {\rightarrow}
\newcommand{\ie} {i.e.\ }
\newcommand {\cala} {{\mathcal {A}}}
\newcommand {\cald} {{\mathcal {D}}}   
\newcommand {\calf} {{\mathcal {F}}}   
\newcommand {\calg} {{\mathcal {G}}}   
\newcommand {\calh} {{\mathcal {H}}}
\newcommand {\bbZ} {{\mathbb {Z}}}   
\newcommand{\grp}[1]{\langle #1 \rangle}
\newcommand{\Out} {{\mathrm{Out}}}
\newcommand{\Inn} {{\mathrm{Inn}}}
\newcommand{\Aut} {{\mathrm{Aut}}}
\newcommand{\ad} {\mathrm{ad}}
\newcommand{\Tcan}{T_{\mathrm{can}}}
\newcommand{\Gcan}{\Gamma_{\mathrm{can}}}
\newcommand{\Z}{{\mathbb {Z}}}
\newcommand{\inc}{\subset}
\newcommand {\N} {{\mathbb {N}}}
\newcommand{\fini}{{\calf in}}
\begin{document}

\title{Vertex finiteness for splittings of relatively hyperbolic groups}
\author{Vincent Guirardel and Gilbert Levitt}

\maketitle

\begin{abstract}
Consider a group $G$ and a family $\cala$ of subgroups of $G$. We say that vertex finiteness holds for splittings of $G$ over $\cala$ if, up to isomorphism, there are only finitely many possibilities for vertex stabilizers of minimal $G$-trees with   edge stabilizers   in $\cala$. 

We show vertex finiteness when $G$ is a toral relatively hyperbolic group and $\cala$ is the family of abelian subgroups. 

We also show vertex finiteness when $G$ is hyperbolic relative to virtually polycyclic subgroups and $\cala$ is the family of virtually cyclic subgroups; if moreover $G$ is one-ended, there are only finitely many minimal $G$-trees with virtually cyclic  edge stabilizers, up to automorphisms of $G$.

\end{abstract}
 
 \section{Introduction} 

There are many results bounding the complexity of simplicial group actions on trees, or equivalently of  graph of groups decompositions. 
They go under the generic name of accessibility, and they are due mainly to Linnell, Dunwoody, Bestvina-Feighn, Sela, Weidmann  \cite{Linnell,Dun_accessibility,BF_bounding,Sela_acylindrical,Weidmann_accessibility}.
They play a key role in geometric group theory, for instance in the construction of JSJ decompositions or Makanin-Razborov diagrams. 

Accessibility usually provides bounds for the number of edges of graph of groups decompositions (splittings) of a given group $G$ over a certain family $\cala$ of edge groups  (hierarchical accessibility \cite{DePo_accessibilite,LoTo_accessibility} is   different).   
In this paper we are concerned with controlling the isomorphism type of vertex groups.

\begin{dfn}
Let $G$ be a group, and let $\cala$ be a family of subgroups closed under conjugating  and taking subgroups. We say that \emph{vertex finiteness} holds for splittings of $G$ over $\cala$ if, up to isomorphism, there are only finitely many possibilities for vertex groups of decompositions of $G$ as the fundamental group of a  
minimal graph of groups $\Gamma$ whose edge groups belong to $\cala$. 
\end{dfn}

 A graph of groups is \emph{minimal} if its Bass-Serre tree   is minimal, i.e.\ contains no proper $G$-invariant subtree. In the case of one-edge splittings, an HNN extension is always minimal; an amalgam $A*_CB$ is minimal if and only if $C\ne A,B$.   We always assume that $G$ is finitely generated, so minimal graphs of groups are finite.
 
   Equivalently, vertex finiteness states that there are finitely many isomorphism types for vertex stabilizers of minimal $G$-trees with edge stabilizers in $\cala$. 

Here are  standard examples of vertex finiteness:

\begin{itemize}
\item  $G$ is finitely generated, and $\cala$ only contains the trivial group.  
Vertex groups are free factors, there are only finitely many of them up to isomorphism. 

 \item $G$ is a free group $F_n$, and $\cala$ is the family of cyclic subgroups (splittings over $\cala$ are then called \emph{cyclic splittings}). 
Every vertex group of a cyclic splitting is  free   of  rank at most $n$ (this may be seen by abelianizing). 

\item  $G$ is the fundamental group of a closed orientable surface of genus $g$, 
and $\cala$ is the family of cyclic subgroups. Vertex groups are  
fundamental groups of embedded subsurfaces; they are   
free of  rank $\le 2g-1$.  

More generally, if $G$ is a one-ended hyperbolic group, and $\cala$ is the class of virtually cyclic groups,
  there are only finitely many possible  vertex groups up to the action of $\Aut(G)$    \cite{Sela_acylindrical,Delzant_accessibilite}.  
\end{itemize}

On the other hand, here are examples where finiteness does not hold,
 even if one restricts to  
amalgams or HNN extensions  (one-edge splittings): 

\begin{enumerate}[(i)]
 \item  Let $G=H*\Z$, with   $H$ containing torsion elements of arbitrarily high order $n$. Let $\cala$ be the class of finite groups. Then $ \Z/n\Z*\Z$ appears as a vertex group in the amalgam $G=H*_{\bbZ/n\bbZ} \left(\Z/n\Z *\Z\right)$.
There are examples with $G$ finitely presented (hence accessible). 
 
 \item Let $G$ be the Baumslag-Solitar group $BS(2,4)=\langle a,t\mid ta^2t\m=a^4\rangle$. For any $n\ge 1$, the group $ \langle x,y\mid x^{2^n} =y^2\rangle$ is a vertex group of a cyclic splitting of $G$ (see the introduction of \cite{Lev_GBS}).

\item In this example, $G$ is hyperbolic relative to the solvable subgroup  $BS(1,2)=\grp{a,t\mid tat\m=a^2}$,
$\cala$ is the class of cyclic groups, and there is no vertex finiteness even among $2$-acylindrical cyclic splittings.
Let $G=BS(1,2)*_{a=[x,y]} F(x,y)$, with $F(x,y) 
$ the free group on $x$ and $y$.
For each $n$, the element $a_n=t^{-n}at^n$ is a $2^n$-th root of $a$,
and  $P_n=\langle a_n,x,y\rangle\simeq \langle a_n,x,y \mid {a_n}^{2^n}=[x,y]\rangle$ 
is a vertex group of the   
cyclic splitting  $G=BS(1,2)*_{\grp{a_n}} P_n$.

\item  
  Let $H$ be the discrete Heisenberg group $H=\langle a,b,c\mid [a,b]=c, [a,c]=[b,c]=1\rangle$. Then $G=H*\Z$ is hyperbolic relative to the nilpotent group $H$,  
and there is no vertex finiteness among $2$-acylindrical splittings of $G$
over the class $\cala$ of nilpotent subgroups.
Indeed, $H$ has infinitely many non-isomorphic subgroups $H_n= \langle a^n,b^n,c \rangle$: they are distinguished by the index of the  derived subgroup in the center (we thank Pierre Pansu for suggesting this example). 
  Each group $ H_n*\Z$ is a vertex group in the 
splitting $G=H*_{H_n} (H_n*\Z)$.
\end{enumerate}

Our main result is the following:

\begin{thm} \label{main}
 Vertex finiteness holds in the following cases:
 
\begin{enumerate}
 \item $G$ is  finitely generated, $k$ is an integer, and $\cala=\fini_k$ is the family  of finite subgroups of order $\le k$;
 \item $G$ is hyperbolic relative to virtually polycyclic subgroups, and $\cala$ is the family of virtually cyclic (finite or infinite) subgroups;
 \item $G$ is hyperbolic relative to finitely generated abelian subgroups  (possibly with torsion), and $\cala$ is the family of   
 virtually  
abelian subgroups;
 \item $G$ is a finitely generated, torsion-free, CSA group, abelian subgroups of $G$ are finitely generated of bounded rank, and $\cala$ is the family of abelian subgroups. 
\end{enumerate}
\end{thm}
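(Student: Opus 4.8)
The plan is to handle case (1) by hand and to reduce cases (2)--(4) to a uniform argument built on relative JSJ theory plus accessibility.

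\textbf{Case (1).} Here I would use a quantitative accessibility result for splittings over $\fini_k$ (Linnell, Bestvina--Feighn): since $G$ is finitely generated there is a bound $N=N(G,k)$ on the number of edges of a reduced graph-of-groups decomposition of $G$ with edge groups in $\fini_k$. Fix a reduced $\fini_k$-decomposition $\Gamma_0$ of $G$ that is maximal for refinement; up to conjugacy there are finitely many such $\Gamma_0$ (boundedly many edges, finitely many isomorphism types of edge group), its vertex groups are either one-ended or finite of bounded order, and a one-ended subgroup is elliptic in every tree with finite edge stabilizers. Given an arbitrary $\fini_k$-tree $T$ with vertex group $A=G_v$, let $A$ act on the Bass--Serre tree $T_0$ of $\Gamma_0$: this writes $A$ as $\pi_1$ of a graph of groups with boundedly many edges, edge groups of order $\le k$ (finitely many types), and vertex groups $A\cap G_w^{T_0}$. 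If $G_w^{T_0}$ is one-ended it is elliptic in $T$, so $A\cap G_w^{T_0}$ is either (a conjugate of) $G_w^{T_0}$ itself or is contained in an edge group of $T$; if $G_w^{T_0}$ is finite so is $A\cap G_w^{T_0}$, of bounded order. In every case there are finitely many possibilities, so $A$ is $\pi_1$ of one of finitely many graphs of groups, and finitely many isomorphism types occur.

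\textbf{Cases (2)--(4).} After using the relative Grushko decomposition to reduce to subgroups that are freely indecomposable relative to their parabolic (virtually polycyclic, resp.\ abelian) subgroups --- the decomposition has boundedly many pieces of finitely many isomorphism types --- I would run the same scheme with $\Gamma_0$ replaced by the canonical JSJ decomposition $J$ of $G$ over $\cala$ relative to the peripheral structure, as provided by the Guirardel--Levitt JSJ theory for relatively hyperbolic groups. Its vertex groups are of three kinds: rigid (elliptic in every $\cala$-tree), quadratically hanging (compact surface/orbifold groups of bounded complexity), and maximal parabolic (the finitely many parabolic subgroups); there are finitely many conjugacy classes of each, and finitely many isomorphism types of rigid ones. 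The needed accessibility bound --- boundedly many orbits of edges in a reduced $\cala$-tree --- comes from the Bestvina--Feighn/Delzant/Sela acylindrical accessibility machinery, suitably relativized. Now given an $\cala$-tree $T$ with vertex group $A=G_v$, restrict the $G$-action on $J$ to $A$: this writes $A$ as $\pi_1$ of a graph of groups with boundedly many edges, with edge groups $A\cap G_e^J$ lying in $\cala$ and of bounded size (bounded-rank abelian / virtually cyclic / bounded finite), hence finitely many types, and with vertex groups $A\cap G_w^J$. When $G_w^J$ is rigid it is elliptic in $T$, so $A\cap G_w^J$ is either a conjugate of $G_w^J$ or fixes two distinct vertices of $T$, hence is contained in an edge group of $T$; when $G_w^J$ is parabolic, $A\cap G_w^J$ is a vertex group of its action on $T$, a virtually polycyclic (resp.\ abelian) subgroup of it of bounded complexity; when $G_w^J$ is quadratically hanging, the induced action on $T$ meets each edge group in a (virtually) cyclic subgroup, so it is dual to a system of curves and arcs and $A\cap G_w^J$ is a subsurface group, again of bounded complexity. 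In all cases only finitely many isomorphism types arise, so $A$ is $\pi_1$ of one of finitely many graphs of groups, which proves vertex finiteness.

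\textbf{Main obstacle.} The fragile point is the claim ``if $G_w^J$ is rigid it is elliptic in $T$'': we need the JSJ \emph{relative} to the parabolics, while an arbitrary $\cala$-tree $T$ need not make the parabolics elliptic, so a rigid vertex group $R$ could a priori act nontrivially on $T$ through its peripheral subgroups. Resolving this is exactly where the hypotheses on $\cala$ and on the parabolic subgroups are used: one must rule out $\cala$-splittings of $R$ not carried by its peripheral structure (via the tree of cylinders, or an acylindricity bound controlling $R$ intersected with an edge stabilizer of $T$), and must show that the remaining peripheral splittings of $R$ and of the parabolic vertices stay within boundedly many isomorphism types --- which relies precisely on the bounded-rank / virtually polycyclic geometry of abelian and virtually polycyclic groups, and is where the failure examples~(i)--(iv) would re-enter if the hypotheses were relaxed. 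The relativized accessibility statement (over non-hyperbolic edge groups) and the uniform complexity bound for the induced surface splittings are the remaining technical inputs requiring care.
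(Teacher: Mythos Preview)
Your architecture --- express $A=G_v$ as the fundamental group of a graph of groups with controlled pieces --- matches the paper's, but your mechanism (\emph{restrict} the $A$-action to a fixed maximal/JSJ tree) differs from the paper's (\emph{refine} $\Gamma$ to a splitting $\Lambda$ collapsing onto both $\Gamma$ and the canonical JSJ $\Gcan$, via universal compatibility), and this difference is where the gaps lie. Universal compatibility is precisely what dissolves your ``main obstacle'': rigid vertices of $\Gcan$ can only be blown up by trivial amalgams in the common refinement $\Lambda$, so one never has to argue their ellipticity in an arbitrary $T$ directly. It also makes the edge count automatic: $A=\pi_1(\Lambda_v)$ for a subgraph $\Lambda_v\subset\Lambda$, and the number of edges of $\Lambda_v$ is bounded by that of a reduced tree in the JSJ deformation space of $G$. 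In your restriction approach, by contrast, the claim that $A\backslash T_0^{\min}$ (or $A\backslash J^{\min}$) has boundedly many edges \emph{uniformly in $A$} is unjustified: accessibility gives a bound depending on the number of generators of $A$, which you have not controlled, and without this bound knowing the vertex and edge groups up to finitely many types does not yield finitely many isomorphism types for $A$.

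Two further points. Your one-line reduction to the one-ended case via ``relative Grushko'' hides real content: an $\cala$-tree with an infinite edge group $C$ need not be compatible with any Stallings--Dunwoody decomposition, and the paper needs Lemma~\ref{dom} (produce a nontrivial splitting over a finite group in which $C$ is elliptic) together with an induction on a complexity $c(H)$ to carry this out. And even once the vertex and edge groups of your $A$-decomposition lie in finite lists, the \emph{inclusion maps} of edge groups into vertex groups must still be controlled to conclude finitely many isomorphism types for $A$; in case~(1) this is where Example~\ref{ex_Linnell} bites and the paper invokes a structural result from \cite{GL2}, while in cases~(3)--(4) with abelian edge groups of rank $\ge 2$ the attachment data are handled by Lemma~\ref{sand}, Corollary~\ref{cor_is}, and the case analysis of Section~\ref{unbout}. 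Your sketch does not address this step at all.
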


In   Assertion 3, groups in $\cala$ are abelian or virtually cyclic. A group   
is \emph{CSA} if maximal abelian subgroups are malnormal.

Note that   Assertion  2 (or   3) implies  vertex finiteness for splittings of hyperbolic groups  (with an arbitrary number of ends) over virtually cyclic subgroups.   Assertion  3 applies to abelian splittings (i.e.\ splittings over abelian groups) of limit groups, since by   \cite{Ali_combination, Dah_combination} limit groups are   toral relatively hyperbolic (i.e.\ torsion-free and hyperbolic relative to finitely generated abelian groups).  

\begin{rem}[Optimality]
Example  (i)  above shows  that bounding the order of edge groups is necessary in  Assertion 1, even if $G$ is finitely presented. 
 Assertion 2 does not apply to groups  which are hyperbolic relative to solvable groups, by Example (iii),
and acylindricity does not help.

  The example in  Subsection \ref{sec_heis}  will show  that Assertion 3 does not extend
if nilpotent parabolic subgroups are allowed.
We do not know whether  virtually abelian parabolic groups may be allowed (see \cite{GL_extension}  for the case of groups having a finite index subgroup as in Assertion  3).  Finally,   bounding the rank of abelian subgroups is necessary in   4:  
if $H$ contains $\Z^{n }$, then $\Z^{n }*\Z$ is a vertex group in a  splitting of $H*\Z$ over $\Z^n$.
\end{rem}

The following property, which we call \emph{tree finiteness},
is   stronger   than vertex finiteness: 
 there are only finitely many minimal splittings of $G$ over $\cala$, up to the action of $\Out(G)$.  
 For instance, it is  easy   to check that tree finiteness   holds for splittings of a finitely generated group
over the trivial group.   However, Example \ref{ex_Linnell} will show  that tree finiteness 
does not hold for splittings over $\bbZ/2\bbZ$ (although vertex finiteness holds by Theorem \ref{main}).

   Tree finiteness was established by Sela and Delzant  \cite[Corollary 4.9]{Sela_acylindrical}, \cite[Theorem 3.2]{Delzant_accessibilite}
    for virtually cyclic splittings of one-ended hyperbolic groups, using acylindrical super accessibility. We generalize their result as follows:

\begin{thm}\label{ttf}
Let $G$ be one-ended, and hyperbolic relative to virtually polycyclic groups. Up to the action of $\Out(G)$, there exist only finitely many minimal splittings of $G$ over virtually cyclic groups.
\end{thm}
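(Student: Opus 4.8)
The plan is to upgrade vertex finiteness (Theorem~\ref{main}, Assertion~2) to tree finiteness by comparing an arbitrary minimal virtually cyclic splitting of $G$ with the canonical JSJ decomposition, and by exploiting that the part of $\Out(G)$ coming from mapping class groups of surface-type vertices, from twists along edges, and from automorphisms of the polycyclic parabolic vertices acts on the relevant splittings with only finitely many orbits.

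First I would reduce to a statement with finitely many combinatorial types. Since the parabolic subgroups are virtually polycyclic, hence finitely presented, $G$ is finitely presented; and $G$ has bounded torsion (it has finitely many conjugacy classes of finite subgroups, by relative hyperbolicity together with the corresponding fact for the polycyclic parabolics), so virtually cyclic subgroups of $G$ realize only finitely many isomorphism types. By accessibility --- Bestvina--Feighn bounded accessibility, virtually cyclic groups being slender, or via the fact that the tree of cylinders of a virtually cyclic splitting of a relatively hyperbolic group is acylindrical together with Sela--Delzant acylindrical accessibility --- the number of orbits of edges in a reduced minimal virtually cyclic $G$-tree is bounded. Combined with vertex finiteness, this shows that, once one forgets the conjugacy classes of the edge-to-vertex injections, only finitely many possibilities remain. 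The whole problem is thus to control these injections, of which there are a priori infinitely many already for $\Z\hookrightarrow\pi_1(\Sigma)$ and for $\Z\hookrightarrow\Z^n$.

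Let $\calp$ be the finite set of conjugacy classes of maximal parabolic subgroups, and let $\Tcan$ be the canonical cylindrical JSJ tree of $G$ over virtually cyclic subgroups relative to $\calp$: it exists by JSJ theory for relatively hyperbolic groups, it is invariant under $\Out(G)$, its flexible vertices are quadratically hanging with finite fibre (virtually fundamental groups of compact surfaces, the incident edge groups being carried by boundary subgroups), and its other vertex groups are rigid, virtually cyclic, or parabolic. As the parabolic vertex groups are polycyclic, they have only finitely many virtually cyclic splittings up to their outer automorphism groups (induction on Hirsch length, vertex and edge groups remaining polycyclic); replacing them by these canonical splittings yields a fixed finite family $\Tcan^{(1)},\dots,\Tcan^{(r)}$ of refinements of $\Tcan$. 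Now, given a reduced minimal virtually cyclic $G$-tree $T$, one first uses this finiteness --- together with the fact that the peripheral automorphisms of a maximal parabolic extend to $\Out(G)$ --- to reduce to the case where every parabolic is elliptic in $T$, so that $T$ is a tree relative to $\calp$. By universal ellipticity of the JSJ relative to $\calp$, the edge stabilizers of $\Tcan$ are elliptic in $T$, hence one can refine $\Tcan^{(i)}$ by the actions on $T$ into a tree $\hat T$ refining $\Tcan^{(i)}$ and dominating $T$. The restriction of $\hat T$ to a vertex of $\Tcan^{(i)}$ is trivial over a rigid vertex (by rigidity, since $T$ is relative to $\calp$), one of finitely many over a virtually cyclic or polycyclic vertex, and over a quadratically hanging vertex $v$ it is a virtually cyclic splitting of $G_v$ relative to its boundary subgroups, hence --- by Dehn's change-of-coordinates principle for curve systems on surfaces --- one of finitely many up to $\Mod(\Sigma_v)$. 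Since $\Mod(\Sigma_v)$ and the twists along the edges of $\Tcan^{(i)}$ are realized inside $\Out(G)$, which is precisely what canonicity of $\Tcan$ provides, the tree $\hat T$ lies in a finite union of $\Out(G)$-orbits; and $T$, a minimal virtually cyclic quotient of $\hat T$ with a bounded number of edges, is then one of finitely many as well. Since minimal graphs of groups are reduced up to the evident collapses, Theorem~\ref{ttf} follows.

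The hard part, and the genuinely new difficulty compared with the one-ended hyperbolic case of Sela and Delzant, is the treatment of the polycyclic parabolic vertices: one must show that each parabolic has only finitely many virtually cyclic splittings modulo its outer automorphism group and, more delicately, that enough of those automorphisms extend to $\Out(G)$ for the corresponding ambiguity in $T$ to collapse. Coordinating the JSJ relative to $\calp$ with splittings that are not relative to $\calp$, and making precise the passage from $\hat T$ down to $T$, is the technical heart; by contrast, the surface analysis is the standard change-of-coordinates argument already present in the hyperbolic setting.
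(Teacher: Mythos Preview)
Your overall strategy --- compare an arbitrary virtually cyclic tree $T$ with the canonical JSJ tree, pass to a common refinement, and normalise using automorphisms of flexible vertex groups that extend to $\Out(G)$ --- is the paper's. But the proposal skips the step where the paper does the real work, namely the \emph{attachment problem}. Fixing the splitting $\Lambda_v$ of a vertex group $G_v$ does not determine the refinement $\hat T$: one must also specify, for each edge of $\Gcan$ incident to $v$, how it attaches to the Bass--Serre tree of $\Lambda_v$ (this is made precise in Lemma~\ref{lem_notations}), and the Heisenberg example of Subsection~\ref{sec_heis} shows that a single $\Lambda_v$ can produce infinitely many inequivalent refinements. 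Your sentence ``$\Mod(\Sigma_v)$ and the twists along the edges of $\Tcan^{(i)}$ are realized inside $\Out(G)$'' names the mechanism but does not prove it suffices; that is the content of Lemma~\ref{lem_TF1}. In particular, at a polycyclic vertex (necessarily virtually $\Z^2$ once it has a non-trivial virtually cyclic splitting), one must show that the automorphisms of $G_v$ preserving the kernel $G'_\eps$ act, up to finite index, on each incident edge group as conjugation by an element of $G_v$ --- this is the ``more delicate'' point you flag but do not argue, and it uses Lemma~\ref{polycy} in a non-trivial way.

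There are two further gaps. First, your reduction ``to the case where every parabolic is elliptic in $T$'' is not well-posed: if a parabolic acts non-trivially on $T$, no automorphism of $G$ makes it elliptic, and pre-refining $\Tcan$ at parabolic vertices does not change this. The paper sidesteps the issue by taking $\Gcan$ to be the JSJ relative to polycyclic subgroups which are \emph{not} virtually cyclic; this $\Gcan$ is universally compatible with all virtually cyclic trees, relative or not, and polycyclic vertices are then handled directly by Lemma~\ref{lem_TF1}. Second, you bound the number of edges of a reduced $T$ via accessibility, but the refinement $\hat T$ is typically not reduced in the sense of \cite{BF_bounding} (it may contain trivial amalgams $G_v*_{G_\eps}G_\eps$, which you also need to get $T$ as a genuine collapse rather than merely a dominated tree). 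The paper bounds the edges of the refinement by a separate argument: it factors the collapse through an intermediate tree $\Gamma'$ with minimal vertex preimages, bounds $\Gamma'$ using the structure of QH and polycyclic vertices, and controls the remaining collapses via the bound on chains of virtually cyclic subgroups in Lemma~\ref{relpolycy}(3).
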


\begin{example}\label{to}
In this example, tree finiteness does not hold for splittings of a one-ended toral relatively hyperbolic group over abelian groups, even if these groups are assumed to be  closed under taking roots.
Let $G$ be the free product of $A=\grp{a_1}\oplus\grp{a_2}\oplus\grp{a_3}\simeq\bbZ^3$
with three free groups $G_i=\grp{x_i,y_i}$, amalgamated along $[x_i,y_i]=a_i$.
For any $b\in \bbZ^3$, there is a one-edge splitting of $G$   over the abelian group $\grp{a_1,b}$,  with vertex groups $\langle G_1,b\rangle$ and $\langle G_2,G_3,A\rangle$. 
Since  $A$ is $\Aut(G)$-invariant (up to conjugacy), 
 and only finitely many automorphisms of $A$ extend to $G$, there is no tree finiteness. Note, however, that the isomorphism type of  $\langle G_1,b\rangle$ only depends on whether $b$ is a power of $a_1$ or not.
\end{example}
 
 Our motivation for Theorem \ref{main} was the study of automorphisms. In  \cite{GL_extension} we use Theorem \ref{main} to extend Shor's theorem \cite{Shor_Scott,LL_crelle} to toral relatively hyperbolic groups: up to isomorphism, there are only finitely many fixed subgroups of automorphisms. Theorem \ref{main} is also an important ingredient in our proof   that the set of McCool groups
of $G$ satisfies a bounded chain condition when $G$ is  
 toral relatively hyperbolic \cite{GL_McCool}
(a
 McCool group of $G$ is
  the subgroup of $\Out(G)$ fixing a given finite set of conjugacy classes of $G$).

Assertion 1 of Theorem \ref{main} is proved in Section \ref{feg}. The other assertions are proved simultaneously in later sections. 
 We  successively    consider    one-edge splittings of one-ended groups, then one-edge splittings of arbitrary groups, and finally   splittings with several edges.  Tree finiteness (Theorem \ref{ttf}) is proved at the end of  Section \ref{unbout}.
 
\paragraph{Acknowledgements}  
The first author acknowledges support from ANR-11-BS01-013. The second author acknowledges support from  ANR-10-BLAN-116-03.

\section{Preliminaries}

\subsection{Trees and splittings} \label{ts}
In this paper, $G$ will always denote a finitely generated group. 

A tree will be a simplicial tree $T$ with an action of $G$ without inversions. Two trees are considered to be the same if there is a $G$-equivariant isomorphism between them.

We usually assume that the action is \emph{minimal} (there is no proper invariant subtree) and that there is \emph{no redundant vertex} (if $T\setminus \{x\}$ has 2 components, some $g\in G$ interchanges them). 
 The tree $T$ is \emph{trivial} if there is a global fixed point (minimality then implies that $T$ is a point).  
An element of $G$, or a subgroup, is \emph{elliptic} if it fixes a point in $T$. 

An action of $G$ on  a tree $T$ gives rise to a splitting of $G$, i.e.\ a decomposition of $G$ as the fundamental group of  the quotient graph of groups $\Gamma=T/G$. Conversely, $T$ is the Bass-Serre tree of $\Gamma$. 
All definitions given here apply to both splittings and trees.

We usually restrict edge groups by requiring that they belong to  a family $\cala$  as in  Theorem \ref{main}.
We then say that the splitting is \emph{over groups in $\cala$}, or \emph{over $\cala$}.  
The group $G$ \emph{splits over  $A$} if $A$ is an edge group of   a non-trivial   splitting. 

There is a one-to-one correspondence between vertices (resp.\ edges) of $\Gamma$ and $G$-orbits of vertices (resp.\ edges) of $T$. We say that $\Gamma$ is a \emph{one-edge splitting} if it has exactly one edge.
We denote  
by $G_v$ the group carried by  a vertex $v$  of  $\Gamma$. We also view $v$ as a vertex of $T$ with stabilizer $G_v$. Similarly, we denote by $e$ an edge of $\Gamma$ or $T$, and by $G_e$ the corresponding group. The   groups carried by edges of $\Gamma$ incident to a given vertex $v$ will  be called the \emph{incident edge groups}  at $v$  (we usually view them as subgroups of $G_v$).

A tree $T'$ is a \emph{collapse} of $T$ if it is obtained from $T$ by collapsing 
each edge in a certain $G$-invariant
collection  
to a point; conversely, we say that $T$ \emph{refines} $T'$. 
In terms of graphs of groups, one passes from $\Gamma=T/G$ to $\Gamma'=T'/G$ by collapsing edges;
  for each vertex $v'$  of $\Gamma'$, 
the vertex group $G_{v'}$ is the fundamental group of the graph of groups $\Gamma_{v'}$ occuring as the preimage of $v'$ in $\Gamma$.

Conversely, suppose   $v'$ is a vertex  of a splitting $\Gamma'$, and $\Gamma_{v'}$ is  a splitting of $G_{v'}$ in which   incident edge groups are elliptic. One may then refine $\Gamma'$ at $v'$ using $\Gamma_{v'}$, so as to obtain a splitting $\Gamma$ whose edges are those of $\Gamma'$ together with those of $\Gamma_{v'}$. Note that $\Gamma$ is not uniquely defined because there is flexibility in the way edges of $\Gamma'$ are attached to vertices of $\Gamma_{v'}$;  this is   discussed in   
 Subsection  \ref{reft}.

All maps between trees will be $G$-equivariant. 
 Given two trees $T$ and $T'$, we say that $T$ \emph{dominates} $T'$ if there is a  
 map $f:T\to T'$, or equivalently if every subgroup which is elliptic in $T$ is also elliptic in $T'$.  In particular, $T$ dominates any collapse $T'$.  

Two trees  belong to the same \emph{deformation space} if they dominate each other. In other words, a deformation space $\cald$ (over $\cala$) is   the set of all trees (with edge stabilizers in $\cala$) having a given family of subgroups as their elliptic subgroups.  
All trees in a given deformation space over $\cala$ have the same set of vertex stabilizers, provided that one restricts to stabilizers not in $\cala$ \cite{GL2}.  We sometimes view a deformation space as a set of splittings (rather than trees).

Groups as in Assertions 2, 3, 4 of Theorem \ref{main} are accessible, so there exists a \emph{Stallings-Dunwoody deformation space}: it consists of trees with finite edge stabilizers  whose vertex stabilizers have at most one end. In the context of Assertion 1, we  shall consider the   
deformation space $\cald_k$ over $\fini_k$  consisting of trees whose vertex  stabilizers do not split over a group in $\fini_k$  (recall that a group is in $\fini_k$ if it has order $\le k$).  
These deformation spaces may (and should) be viewed as JSJ deformation spaces  over
the class of finite groups or 
over $\fini_k$  
respectively (see \cite{GL3a}).

A tree is \emph{reduced} if $G_e\ne G_v,G_w$ whenever an edge $e$ has its endpoints $v,w$ in different $G$-orbits 
 (being reduced in the sense of \cite{BF_bounding} is   a weaker property). Equivalently, no tree obtained from $T$ by collapsing the orbit of an edge belongs to the same deformation space as $T$. If $T$ is not reduced, one may collapse edges so as to obtain a reduced tree in the same deformation space.

\subsection {Virtually polycyclic groups}

We collect a few simple algebraic facts.  
 We write $ | X | $ for the cardinality of a finite set.

\begin{lem} \label{polycy}
Let $H$ be virtually polycyclic.
\begin{enumerate}
\item $H$ only contains finitely many conjugacy classes of finite subgroups.
\item  Given   a subgroup $A\inc H$, there exists a finite index subgroup $A_0\inc A$ such that, up to conjugation by an element of the normalizer $N(A_0)$, there exist    only   finitely many subgroups $B\inc H$ containing $A$ with finite index. 

\item Given   a subgroup $A\inc H$, there exists a number $N$ such that, if $B\inc H$ contains $A$ with finite index $n$, then $n\le N$. 
\end{enumerate}
\end{lem}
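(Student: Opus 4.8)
The plan is to reduce everything to the case of polycyclic groups (the finite-index subgroup case, and then finite groups) and to use the good structural properties of polycyclic groups: they are Noetherian, so every subgroup is finitely generated, and they have a well-defined Hirsch length $h(\cdot)$ which is additive along short exact sequences and monotone under inclusion.

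\textbf{Assertion 1.} First I would pass to a finite-index normal polycyclic subgroup $H_0 \normal H$. Finite subgroups of $H$ inject into the finite quotient $H/H_0$ (since $H_0$ is torsion-free — one may arrange this by a further refinement), so there are only finitely many isomorphism types, and in fact finitely many subgroups of $H$ containing no nontrivial element of... more carefully: a finite subgroup $F \inc H$ maps injectively to $H/H_0$, hence has order at most $|H/H_0|$; now invoke that a polycyclic-by-finite group contains only finitely many conjugacy classes of finite subgroups of bounded order. One clean way: $H$ has a torsion-free finite-index subgroup $H_1$; the set of subgroups of $H$ of order $\le |H/H_1|$ is controlled because each such subgroup, together with $H_1$, generates a subgroup of bounded index, and then one counts. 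The cleanest citation-free route is to quote that virtually polycyclic groups have finitely many conjugacy classes of finite subgroups — this is classical (it follows from the fact that they are of type $\mathrm{VFP}$, or from a direct cohomological/induction argument). I would likely just give the short argument via a torsion-free finite-index subgroup $H_1$ and the observation that finite subgroups are detected in $H/H_1$.

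\textbf{Assertions 2 and 3 together.} These concern overgroups $B$ of a fixed $A \inc H$ with $[B:A] < \infty$. The key point is that such $B$ normalizes $A_0 := \bigcap_{[B:A]=n} A^{b}$... no — better: since $A$ has finite index in $B$, $A$ contains a further subgroup $A_0$ which is normal in $B$ and still of finite index (the normal core of $A$ in $B$). The problem is that $A_0$ depends on $B$. To get a uniform $A_0$, I would use the following: the set of overgroups $B$ of $A$ with $[B:A] = n$ all lie inside the commensurator-type object... Actually the right move is: $B \inc N(A) \cdot$(something)? Let me think again. If $[B:A] = n$, then $B$ normalizes the subgroup $A_0 := \bigcap \{ A^b : b \in B\}$, which has index dividing $n!$ in $A$, hence $h(A_0) = h(A)$ and $[A:A_0] \le n!$. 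But for Assertion 3 it suffices to bound $n$: since $A \inc B \inc H$ and all are finitely generated with $h(A) \le h(B) \le h(H)$, if $h(A) < h(B)$ then $B$ is infinite over $A$; so $[B:A] < \infty$ forces $h(A) = h(B)$, and then $[B:A]$ is bounded by the torsion in $B/A_0''$ ... this needs care. The genuinely clean argument for Assertion 3: pass to the subgroup $A^*$ of $H$ of all elements commensurating... no. I'd argue: let $A_1 \inc A$ be a torsion-free finite-index normal-in-$A$ subgroup of maximal Hirsch length; any $B \supseteq A$ of finite index has $h(B) = h(A) = h(A_1)$, and $A_1$ is finite-index in $B$; the number of subgroups of $H$ of Hirsch length exactly $h(A_1)$ and containing $A_1$ with index controlled... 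Ultimately, Assertion 3 should follow because $H/A_1$-type considerations bound the index: in a polycyclic group, a subgroup $A_1$ of full Hirsch length in its overgroups has only finitely many overgroups, period (the overgroups correspond to subgroups of the virtually-finite quotient $N_H(A_1)/A_1$ containing the image of $A/A_1$... and that quotient is virtually polycyclic of Hirsch length $h(H) - h(A_1)$, which can still be infinite). So I need the commensurability argument: $B$ normalizes a finite-index subgroup of $A_1$, so $B \inc \mathrm{Comm}_H(A_1)$; but the subgroup of $H$ commensurating $A_1$, modulo $A_1$, is where the finiteness must come from — and it does NOT in general. Hence one really must allow the normalizer-conjugation slack in Assertion 2. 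I would set $A_0$ to be a characteristic finite-index subgroup of $A$ chosen so that every finite-index overgroup $B$ of $A$ in $H$ lies in $N_H(A_0)$ (achievable: take $A_0$ to be the intersection of all subgroups of $A$ of index $\le [H:A']$ for a suitable reference, using that $A$ is Noetherian so this is a finite intersection, hence finite index, hence characteristic-ish and normalized by anything normalizing enough of $A$); then $B/A_0 \inc N_H(A_0)/A_0$ is a \emph{finite} subgroup of a virtually polycyclic group, and Assertion 1 applied to $N_H(A_0)/A_0$ gives finitely many conjugacy classes, yielding both finitely many $B$ up to $N(A_0)$-conjugacy (Assertion 2) and a bound on $[B:A] \le [B:A_0]$ (Assertion 3).

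\textbf{Main obstacle.} The delicate point is producing the \emph{uniform} finite-index subgroup $A_0 \inc A$ normalized by every finite-index overgroup $B$; a naive normal core depends on $B$. I expect to handle this by exploiting that $A$ is polycyclic-by-finite hence has, for each $m$, only finitely many subgroups of index $\le m$, so their intersection $A_0(m)$ is a characteristic finite-index subgroup; then showing that for $m$ large enough (depending only on $h(H)$ and $A$) every finite-index overgroup $B$ satisfies $[B:A] \le m'$ for a controlled $m'$ and normalizes $A_0(m)$. Verifying that last normalization — that $B$ permutes the subgroups of $A$ of bounded index, hence fixes their intersection — is the crux and should follow once one knows $[B:A]$ is bounded, which is exactly Assertion 3; so I would prove a bound on $[B:A]$ first (via Hirsch length: $h(B)=h(A)$, and then the torsion of $B$ relative to a torsion-free finite-index subgroup of $A$ of full Hirsch length is bounded by the torsion of $H$ over that subgroup, using that $H$ is virtually torsion-free and the relevant finite quotient is fixed), then deduce the normalization, then apply Assertion 1. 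I'd present the three parts in the order 1, then 3, then 2.
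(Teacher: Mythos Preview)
Your outline for Assertion 1 is fine and matches the paper's in spirit (the paper simply cites Segal).

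For Assertions 2 and 3, however, there is a genuine gap. The paper's route is the one you mention and then abandon: every finite-index overgroup $B$ of $A$ lies in the commensurator $C(A)$, and one sets $A_0=\bigcap_{g\in C(A)} gAg^{-1}$. The point you are missing is Rhemtulla's minimality theorem for polycyclic groups: this infinite-looking intersection equals a \emph{finite} intersection of conjugates of $A$, so $[A:A_0]<\infty$. Since $A_0\normal C(A)$, each $B/A_0$ is a finite subgroup of the virtually polycyclic group $C(A)/A_0$, and Assertion 1 applied there gives both 2 and 3 at once. So your sentence ``the subgroup of $H$ commensurating $A_1$, modulo $A_1$, is where the finiteness must come from --- and it does NOT in general'' is exactly backwards: it does, via Rhemtulla.

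Your proposed workaround (prove 3 first by Hirsch length, then build $A_0$) does not close. First, the Hirsch-length step is incomplete: $h(B)=h(A)$ is automatic, but it does not by itself bound $[B:A]$, and the phrase ``torsion of $H$ over $A_1$'' has no meaning because $A_1$ is not normal in $H$; passing to a torsion-free finite-index $H_1\le H$ reduces to the same question inside a torsion-free polycyclic group, where you still have to bound the index and have given no mechanism for doing so. Second, even granting a bound $[B:A]\le N$, your candidate $A_0$ (the intersection of all subgroups of $A$ of index $\le m$) is characteristic in $A$, but your key claim ``$B$ permutes the subgroups of $A$ of bounded index'' is false: conjugation by $b\in B$ sends subgroups of $A$ to subgroups of $bAb^{-1}$, not of $A$. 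What $B$ does normalize is $\mathrm{Core}_B(A)$, which depends on $B$; to get a single $A_0$ normalized by \emph{all} $B$ you are forced back to intersecting over $C(A)$, i.e.\ to the paper's argument and to Rhemtulla.
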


\begin{proof} The first assertion  is contained in Theorem 8.5 of \cite{Segal_livre}. It  is equivalent to  2 when $A$ is trivial. 

To prove 2 in general,   define $C(A)$ as the commensurator of $A$, equal to the set of $g\in G$ such that $gAg\m\cap A$ has finite index in $A$ and $gAg\m$. Note that any $B$ containing $A$ with finite index is contained in $C(A)$. Let $A_0=\cap_{g\in C(A)}gAg\m$. By \cite{Rhemtulla67}, $A_0$  is the intersection of a finite family of conjugates of $A$, so $A_0$ has finite index in $A$. It is normal in $C(A)$, and Assertion 2 follows by applying   1 to $C(A)/A _0$. In particular, there is a bound for the index of $A_0$ in $B$, so 3 is proved. 
\end{proof}

\begin{lem} \label{vc}
\begin{enumerate}
    
\item Given $n\in\N$, there are finitely many isomorphism types of virtually cyclic  groups $A$ such that all finite subgroups of $A$ have order $\le n$.
\item Given two virtually cyclic groups $A$ and $B$, and $n\in\N$, there are   only finitely many   monomorphisms  $i:A\to B$ such that the index of $i(A)$ in $B$ is $\le n$, up to precomposition by an inner automorphism of $A$.

\end{enumerate}
\end{lem}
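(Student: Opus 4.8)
The plan is to reduce both statements to the structure theory of virtually cyclic groups. I will use that an infinite virtually cyclic group $A$ has a largest finite normal subgroup $F_A$ (hence characteristic), with $A/F_A$ isomorphic either to $\Z$ or to the infinite dihedral group $D_\infty$, so that $A\cong F_A\rtimes\Z$ or $A\cong P*_{F_A}Q$ with $[P:F_A]=[Q:F_A]=2$; in the latter case $P,Q$ embed in $A$ with order $2\abs{F_A}$ and every finite subgroup of $A$ is conjugate into $P$ or $Q$, while in the former every finite subgroup lies in $F_A$ (it maps trivially to $\Z$).

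Assertion~1 would then follow immediately: a finite $A$ all of whose subgroups have order $\le n$ has $\abs A\le n$, and for infinite $A$ the hypothesis bounds $\abs{F_A}$ (resp.\ $\abs P,\abs Q$) by $n$; since there are finitely many finite groups of order $\le n$, finitely many automorphisms of each, and finitely many ways to amalgamate two of them along an index-$2$ subgroup, only finitely many isomorphism types arise.

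For Assertion~2 I would combine two facts. First, $B$ is finitely generated, hence has only finitely many subgroups $H_1,\dots,H_r$ of index $\le n$; the image of any monomorphism $i$ as in the statement is one of the $H_j$, so it suffices to bound, for each $j$, the number of isomorphisms $A\to H_j$ up to precomposition by $\Inn(A)$. Choosing one such isomorphism $\theta_0$ (when it exists) identifies the isomorphisms $A\to H_j$ with $\Aut(A)$ compatibly with the $\Inn(A)$-action, so this number is $0$ or $\abs{\Out(A)}$. Everything therefore reduces to the second fact: \emph{$\Out(A)$ is finite for every virtually cyclic $A$}. This is clear when $A$ is finite. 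When $A$ is infinite, restriction to the characteristic subgroup $F_A$ gives $\rho\colon\Aut(A)\to\Aut(A/F_A)$ whose kernel $K$ (the automorphisms fixing $F_A$ setwise and inducing the identity on $A/F_A$) is finite, being determined by an element of $\Aut(F_A)$ and by the images of finitely many generators of $A$ in their finite $F_A$-cosets; and $\rho(\Inn(A))=\Inn(A/F_A)$ has index $\le 2$ in $\Aut(A/F_A)$ since $A/F_A\in\{\Z,D_\infty\}$ and $\abs{\Out(\Z)}=\abs{\Out(D_\infty)}=2$. Hence $\Inn(A)\cdot K$ has index $\le 2$ in $\Aut(A)$ and $\abs{\Out(A)}\le 2\abs K<\infty$.

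The main obstacle will be precisely the finiteness of $\Out(A)$ in the dihedral case $A/F_A\cong D_\infty$: there $\Aut(A)$ itself may be infinite (it surjects onto a finite-index subgroup of $\Aut(D_\infty)\cong D_\infty$), so passing to $\Out(A)$ is unavoidable, and the point is that the infinite part of $\Aut(A)$ is already realized by inner automorphisms. This dichotomy $A/F_A\in\{\Z,D_\infty\}$ is also where virtual cyclicity is genuinely used; it has no analogue for the virtually polycyclic groups of Lemma~\ref{polycy}.
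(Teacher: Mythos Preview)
Your proposal is correct and follows essentially the same line as the paper's own proof: both use the dichotomy that an infinite virtually cyclic group maps onto $\Z$ or $D_\infty$ with finite kernel to handle Assertion~1, and both reduce Assertion~2 to the finiteness of possible images together with the finiteness of $\Out(A)$. The paper simply asserts that $\Out(A)$ is finite, whereas you supply a short argument via the induced map $\Aut(A)\to\Aut(A/F_A)$; this extra detail is correct and a welcome addition.
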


\begin{proof} An infinite virtually cyclic group $A$ maps with finite kernel $N$ onto a group  
  which is 
  either infinite cyclic or equal to the infinite dihedral group $D_\infty$ 
(see \cite[Theorem 5.12]{ScottWall}).   
In the first case, $A$  is a semidirect product  $N\rtimes \bbZ 
$ and 
there are only finitely many possibilities for $A$ up to isomorphism since $ | N | $ is bounded.
In the second case, $A\simeq N_1*_N N_2$ with $ | N_1 | = | N_2 | =2 | N | $, and again there are only finitely many possibilities.
For the second assertion, note that there are finitely many possibilities for the image of $i$. Two injections with the same image differ by an automorphism of $A$, and $\Out(A)$ is finite. 
\end{proof}

\begin{lem} \label{sand} 
 Fix  a finitely generated abelian group $P$, and  a subgroup $A\inc P$.
Say that two subgroups $B,B'$ with $A\inc B\inc P$ and $A\inc B'\inc P$ are equivalent if there is an isomorphism $B\to B'$ equal to the identity on $A$. 

Then the number of equivalence classes is finite. 
\end{lem}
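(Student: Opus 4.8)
The plan is to show that, up to isomorphism fixing $A$, every subgroup $B$ with $A\inc B\inc P$ has one of finitely many shapes. First I would introduce the \emph{isolator} $A'=\{x\in P\mid nx\in A\text{ for some }n\ge 1\}$ of $A$ in $P$. This is a subgroup with $A\inc A'\inc P$, and $A'/A$ is a finitely generated torsion abelian group, hence finite; so there are only finitely many subgroups $C$ with $A\inc C\inc A'$. I would also let $r$ denote the torsion-free rank of $P$, so that $\mathrm{rk}(B)-\mathrm{rk}(A)\le r$ for every such $B$.

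Given $B$ with $A\inc B\inc P$, I would consider the isolator $B_{\mathrm{sat}}=\{x\in B\mid nx\in A\text{ for some }n\ge 1\}$ of $A$ in $B$. It satisfies $A\inc B_{\mathrm{sat}}\inc A'$, so it is one of the finitely many subgroups above; moreover $B_{\mathrm{sat}}/A$ is finite and $B/B_{\mathrm{sat}}$ is finitely generated and torsion-free, hence free of some rank $j$ with $0\le j\le r$. The main point is that, $B/B_{\mathrm{sat}}$ being free, the sequence $0\to B_{\mathrm{sat}}\to B\to B/B_{\mathrm{sat}}\to 0$ splits, so $B$ decomposes as an internal direct sum $B_{\mathrm{sat}}\oplus\bbZ^{j}$; since $A\inc B_{\mathrm{sat}}$, the resulting isomorphism $B\xrightarrow{\ \sim\ }B_{\mathrm{sat}}\oplus\bbZ^{j}$ restricts to the identity on $A$ (with $A$ embedded as $A\oplus\{0\}$).

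It then follows that the assignment $B\mapsto(B_{\mathrm{sat}},j)$ takes values in the finite set $\{C\mid A\inc C\inc A'\}\times\{0,1,\dots,r\}$, and that two subgroups with the same image are equivalent: if $B_{\mathrm{sat}}=B'_{\mathrm{sat}}$ and the free ranks coincide, then $B\cong B_{\mathrm{sat}}\oplus\bbZ^{j}=B'_{\mathrm{sat}}\oplus\bbZ^{j}\cong B'$ by isomorphisms fixing $A$, so $B$ and $B'$ are equivalent. Hence the number of equivalence classes is at most $\abs{\{C\mid A\inc C\inc A'\}}\cdot(r+1)$, which is finite. The only step needing a little care is checking that the splitting of $B$ over $B_{\mathrm{sat}}$ can be chosen to fix $A$ pointwise; but this is immediate because $A$ sits inside the direct summand $B_{\mathrm{sat}}$, so there is no real obstacle — the content of the lemma is simply that a free complement of the isolator of $A$ contributes nothing beyond its rank.
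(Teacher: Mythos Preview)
Your proof is correct and is essentially the same as the paper's: your isolator $B_{\mathrm{sat}}$ is exactly the paper's root-closure $e(A,B)$, your $A'$ is $e(A,P)$, and both arguments conclude by observing that the equivalence class of $B$ is determined by the pair $(B_{\mathrm{sat}},\text{rank of the free complement})$, which ranges over a finite set. The only differences are terminological.
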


\begin{proof} 
Define the root-closure  $e( A,B)$ as the set of  elements of $B$ having a power in $A$.  
It contains the torsion subgroup of $B$, and it is 
the smallest subgroup  of  $B$ containing $A$ and such that $B=e(A,B)\oplus B_0$ with $B_0\subset B$   torsion-free.
Equivalently, $e(A,B)$ is
the largest subgroup of $B$ containing $A$ with finite index. 
Note that $A\inc e( A,B)\inc e(A,P)$, with all indices finite. 
As $B$ varies, there are only finitely many possibilities for $e(A,B)$, and for the isomorphism type of   $B_0$. 
  When $e(A,B)=e(A,B')$, and  $B_0\simeq B'_0$,
any isomorphism $B_0\ra B'_0$
extends to an isomorphism $B\to B'$ equal to the identity on $e(A,B)$, hence on $A$.
\end{proof}

\begin{cor}\label{cor_is}
Fix two groups  $ G_0$ and $P$ with a common subgroup $A$, where $P$ is finitely generated abelian.
As $B$ varies among subgroups such that $A<B<P$, 
  the groups $G_0*_A B$
lie in finitely many isomorphism classes.
\end{cor}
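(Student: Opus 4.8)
The plan is to deduce this immediately from Lemma \ref{sand}. The key point is that the isomorphism type of $G_0 *_A B$ depends only on the equivalence class of $B$ in the sense of that lemma. So first I would prove: if $B,B'$ are subgroups with $A<B<P$ and $A<B'<P$, and there is an isomorphism $\phi\colon B\to B'$ restricting to the identity on $A$, then $G_0*_AB\simeq G_0*_AB'$. To see this, apply the universal property of the amalgamated free product to the homomorphisms $\id_{G_0}\colon G_0\to G_0*_AB'$ and $B\xrightarrow{\phi}B'\into G_0*_AB'$: they agree on $A$ (since $\phi|_A=\id_A$), hence induce a homomorphism $\Phi\colon G_0*_AB\to G_0*_AB'$. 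Symmetrically, $\phi\m$ induces a homomorphism in the other direction, and the two compositions are the identity on $G_0$ and on $B$ (resp.\ $B'$), hence on the whole amalgam; thus $\Phi$ is an isomorphism.

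Next I would simply invoke Lemma \ref{sand} for the finitely generated abelian group $P$ and its subgroup $A$: the subgroups $B$ with $A<B<P$ fall into finitely many equivalence classes. Combining this with the previous paragraph, the groups $G_0*_AB$ lie in at most that many isomorphism classes, so only finitely many.

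I do not expect any genuine obstacle here; the only mild care needed is in checking that $\Phi$ is an isomorphism and not merely a homomorphism, which is a formal consequence of the universal property. One should also note that nothing is assumed about $G_0*_AB$ being a nontrivial amalgam: if $B=A$ the group $G_0*_AB$ is just $G_0$, contributing a single extra isomorphism class, which is harmless for the finiteness statement.
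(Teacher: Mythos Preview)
Your proposal is correct and follows exactly the paper's approach: the paper's proof is the single sentence ``Indeed, $G_0*_A B\simeq G_0*_A B'$ if $B,B'$ are equivalent,'' and you have simply spelled out why this isomorphism holds via the universal property before invoking Lemma~\ref{sand}.
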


  Indeed,  $G_0*_A B\simeq G_0*_A  B'$ if $B,B'$ are equivalent.

\subsection{Relatively hyperbolic groups}

Suppose that $G$ is  as in Assertion 2 or 3   of Theorem \ref{main}, \ie 
$G$ is hyperbolic relative to a finite family $\{P_1,\dots, P_k\}$ of finitely generated subgroups, 
  which are virtually polycyclic or abelian. 
Subgroups of $P_i$, and their conjugates, are called \emph{parabolic}.  A subgroup of $G$  is  virtually polycyclic if and only if it is  
 parabolic or virtually cyclic. 
 
Any infinite virtually polycyclic subgroup is contained in a unique maximal one, which is virtually cyclic (loxodromic) or conjugate to some $P_i$.   Such a maximal subgroup $H$  is \emph{almost malnormal}: if $gHg\m\cap H$ is infinite, then $g\in H$.

\begin{lem} \label{relpolycy}
Let $G$ be hyperbolic relative to virtually polycyclic subgroups. 
\begin{enumerate}
  \item  $G$ only contains finitely many conjugacy classes of finite subgroups.
\item Up to isomorphism, $G$ only has   finitely many virtually cyclic subgroups. 
\item  Given  a virtually polycyclic subgroup $A\inc G$, there are only finitely many groups $B\inc G$ containing $A$ with finite index, up to conjugacy in $G$; when $B$ varies, the index of $A$ in   $B$ remains bounded. 
\end{enumerate}
\end{lem}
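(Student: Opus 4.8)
The plan is to prove the three assertions in turn, combining the algebraic facts in Lemmas \ref{polycy} and \ref{vc} with two structural features of $G$: finite subgroups are elliptic in every simplicial $G$-tree, and every infinite virtually polycyclic subgroup of $G$ lies in a unique maximal one (almost malnormal, as recalled above). Assertion 1 amounts to the fact that a relatively hyperbolic group inherits finiteness of the set of conjugacy classes of finite subgroups from its peripheral subgroups. Each $P_i$, being virtually polycyclic, contributes only finitely many such classes by Lemma \ref{polycy}(1). A finite subgroup $F\le G$ not conjugate into any $P_i$ is handled by the hyperbolic geometry: $G$ acts on a connected fine hyperbolic graph with finitely many orbits of vertices and of edges, finite edge stabilizers, and the maximal parabolic subgroups as its infinite vertex stabilizers; $F$ has a bounded orbit, hence fixes a vertex, hence lies in one of finitely many vertex stabilizers (alternatively, use a Stallings--Dunwoody tree to reduce to finite or one-ended vertex groups and quote the known statement for hyperbolic groups). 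I would cite the literature on relatively hyperbolic groups for this step. In particular it yields a uniform bound $N$ on the order of a finite subgroup of $G$.

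Assertion 2 then follows at once: any virtually cyclic $A\le G$ has all its finite subgroups of order $\le N$, so Lemma \ref{vc}(1) leaves only finitely many isomorphism types. For Assertion 3, I would split according to whether $A$ is finite. If $A$ is finite, every $B\le G$ containing $A$ with finite index is finite of order $\le N$, so $[B:A]\le N$ is bounded and, by Assertion 1, such $B$ fall into finitely many conjugacy classes. If $A$ is infinite, let $H$ be the unique maximal virtually polycyclic subgroup containing $A$. Any $B\le G$ containing $A$ with finite index is again infinite virtually polycyclic, hence contained in its own unique maximal virtually polycyclic subgroup $H_B$; but $H_B\supseteq A$ as well, so uniqueness forces $H_B=H$, that is, $B\le H$. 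It then suffices to work inside the virtually polycyclic group $H$: Lemma \ref{polycy}(3) bounds $[B:A]$, and Lemma \ref{polycy}(2) provides a finite-index subgroup $A_0\le A$ such that the subgroups $B$ fall into finitely many $N(A_0)$-conjugacy classes, hence a fortiori into finitely many $G$-conjugacy classes.

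The only genuinely non-elementary ingredient is the hyperbolic-geometry half of Assertion 1, namely the control of finite subgroups not conjugate into a peripheral subgroup, where I expect to rely on existing machinery for relatively hyperbolic groups; the remaining steps are bookkeeping with the preliminary lemmas, the bound $N$, and the uniqueness and almost malnormality of maximal virtually polycyclic subgroups.
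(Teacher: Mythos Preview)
Your proposal is correct and follows essentially the same approach as the paper. The paper's proof is terser---it cites a single reference (Lemma 3.1 of \cite{GL6}) for the fact that all but finitely many conjugacy classes of finite subgroups are parabolic, and for Assertion 3 it splits into the cases $A$ finite, loxodromic, or parabolic rather than your finite/infinite dichotomy---but the underlying ideas and the use of Lemmas \ref{polycy} and \ref{vc} are identical; your unification of the loxodromic and parabolic cases via the unique maximal virtually polycyclic subgroup $H$ is a cosmetic variation.
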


\begin{proof} In a  relatively hyperbolic group,  all finite subgroups outside of a finite number of conjugacy classes 
are parabolic 
(see for instance Lemma 3.1 of \cite{GL6}),   so Assertion 1 follows from Lemma \ref{polycy}. 
Assertion 2  follows from Lemma \ref{vc}. Assertion 3 is clear if $A$ is finite or loxodromic, and follows from Lemma \ref{polycy} otherwise.
\end{proof}

Note that the lemma also holds if $G$ is a CSA group as in Assertion 4. In this case, all virtually polycyclic subgroups are abelian.

\subsection{About the proofs}\label{rks}

The next four sections are  devoted to the proof of Theorems \ref{main} and \ref{ttf}. All splittings will be over groups in the relevant family $\cala$.  

Note that, under all assumptions, 
groups in  $\cala$  are virtually abelian and  fall into finitely many isomorphism classes (this
 follows from Lemma \ref {relpolycy}  in the  relatively hyperbolic case).  
It therefore  suffices to prove  vertex finiteness for  reduced splittings 
since vertex groups not in $\cala$ remain when one collapses edges to  obtain a reduced splitting in the same deformation space.
Another consequence is  that vertex finiteness in fact holds for non-minimal splittings.

We also  note that vertex groups of splittings of $G$ over $\cala$ satisfy the assumptions of the theorem (they are finitely generated, relatively hyperbolic, or CSA). 
In the  relatively  hyperbolic case,  this follows from Theorem 1.3 of \cite{Bo_peripheral}, as explained in the proof of Theorem 3.35 of \cite{DaGr_isomorphism} (for Assertion 3, note that nonabelian virtually cyclic groups may be removed from the list of maximal parabolic subgroups);  
in the CSA case, vertex groups  are finitely generated (because edge groups are) and CSA. This makes inductive arguments possible.

A basic method for showing vertex finiteness is to  represent 
any vertex group $G_v$ as
the fundamental group of another graph of groups  whose number of edges is bounded, 
and where the set of possible isomorphism types of edge and vertex groups is finite. 
One then has to control  inclusions of edge groups into vertex groups. 

When edge groups are finite, it suffices to know that vertex groups only contain finitely many conjugacy classes of finite subgroups, 
since postcomposing an inclusion $G_e\to G_v$ with an inner automorphism of $G_v$ does not change the fundamental group of the graph of groups.

 When edge groups are infinite  and $G$ is one-ended, we use   
a canonical JSJ decomposition $\Gcan$, and   its \emph{universal compatibility} with the splittings considered: 
given any $\Gamma$, there is a splitting $\Lambda$ such that both $\Gamma$ and $\Gcan$ may be obtained from $\Lambda$ by collapsing edges.

\section{Splittings over finite  groups}
\label{feg}

We prove the first assertion of Theorem \ref{main}.
All splittings considered here will be  minimal and over groups belonging to $\fini_k$, the family of all subgroups of order $\le k$. Linnell's accessibility \cite{Linnell} provides a bound (depending on $G$ and $k$) for the number of edges of such splittings, as long as the splittings have no redundant vertex. 
 
We  shall first show:

\begin{lem} \label{sk2} Let $G$ be a finitely generated group,  $k\ge1$, and $\cald$ a deformation space over $\fini_k$. 
Then $\cald$ only contains  finitely many reduced trees  $T$ up to the action of $\Out(G)$. 
\end{lem}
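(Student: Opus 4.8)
The plan is to reduce the statement to a counting problem for abstract graphs of groups. For a reduced $G$-tree $T$, the $\Out(G)$-orbit of $T$ is determined by the isomorphism type of the quotient graph of groups $\Gamma=T/G$ once one forgets the marking $\pi_1(\Gamma)\simeq G$; conversely $\Gamma$, equipped with any marking, recovers a tree in that orbit (two markings differ by an automorphism of $G$). Hence two reduced trees of $\cald$ are $\Out(G)$-equivalent if and only if the associated graphs of groups are isomorphic, and it suffices to bound the number of isomorphism types of reduced graphs of groups $\Gamma$ over $\fini_k$ with $\pi_1(\Gamma)\simeq G$ that arise from trees of $\cald$. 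Such a $\Gamma$ is determined up to isomorphism by its underlying graph $\underline\Gamma$, by the isomorphism types of its vertex and edge groups, and by the inclusions $G_e\hookrightarrow G_v$, each taken up to $\Aut(G_e)$ on the source and $\Aut(G_v)$ on the target; I would bound each of these in turn.

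First I would bound the combinatorics. A reduced minimal $G$-tree has no redundant vertex unless $G$ is virtually cyclic: a redundant vertex $v$ has valence $2$, its stabilizer acts on the two incident edges $e_1,e_2$, and if it does not permute them it fixes both, so $G_v=G_{e_1}=G_{e_2}$; reducedness then forces both $e_i$ to have their endpoints in the orbit of $v$, and a local count at $v$ (comparing the $G_v$-orbits of edges at $v$ with the ends contributed by loops in $\Gamma$) shows that $\Gamma$ is a single loop, whence $G=G_v\rtimes\bbZ$ with $\lvert G_v\rvert\le k$. A similar argument (using that trees have no loop edges and the action has no inversions) rules out valence-$1$ vertices. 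The virtually cyclic case of the lemma is easy and is treated separately. Otherwise reduced trees in $\cald$ have no redundant vertex, so Linnell's accessibility \cite{Linnell} bounds their number of edges, hence of vertices, and $\underline\Gamma$ ranges over a finite set.

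Next, the groups: edge groups have order $\le k$, so only finitely many isomorphism types occur; and by \cite{GL2} the vertex stabilizers of reduced trees in $\cald$ that do not lie in $\fini_k$ form a fixed finite family $H_1,\dots,H_m$ of subgroups (up to conjugacy), so there are finitely many isomorphism types of vertex groups as well. The point I expect to be the real obstacle is the control of the inclusions: one must rule out that an infinite vertex group $H_i$ — conceivably a wild group like $\bbZ/2\wr\bbZ$ — receives an edge group $E$ as a subgroup in infinitely many inequivalent ways. The approach I would take is to exploit that, edge groups being finite, there are no ascending loops, so for $G$ not virtually cyclic any two reduced trees of $\cald$ are joined by a finite sequence of slide moves; a slide preserves the set of edge groups up to conjugacy in $G$ (the two inclusions of a given edge group are conjugate in $G$) and only moves the edge group being slid around inside the finite incident edge group it is slid along. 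Combined with the bound on the number of edges, this should confine the edge inclusions to finitely many possibilities up to $\Out(G)$. Granting this, $\Gamma$ is pinned down up to isomorphism by data lying in finite sets, and the lemma follows.
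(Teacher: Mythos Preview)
Your approach coincides with the paper's: bound the underlying graph via Linnell, observe that edge groups and big vertex groups range over finitely many isomorphism types, and then control the edge inclusions into the big vertex groups. For that last step the paper simply invokes Proposition~4.9 of \cite{GL2}: fixing one reduced $T_0\in\cald$, each vertex group $H\notin\fini_k$ carries finitely many subgroups $E_1,\dots,E_r\in\fini_k$ (its incident edge groups in $T_0$) such that, for \emph{every} reduced $T\in\cald$, every incident edge group at the vertex of $T$ stabilized by a conjugate of $H$ lies in an $H$-conjugate of some $E_i$. Since each $E_i$ is finite it has finitely many subgroups, so there are finitely many possible images up to $H$-conjugacy, and the lemma follows.

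Your slide argument is headed toward exactly this statement, but as written it does not close the gap. Knowing that slides preserve edge groups up to $G$-conjugacy is not enough: a finite subgroup can have a single $G$-conjugacy class but infinitely many $H$-conjugacy classes inside a given vertex group $H$, which is precisely the danger you flag with the lamplighter example. The missing inductive observation is that when an edge $e$ slides across $f$ \emph{into} the vertex $v$ fixed by $H$, the group $G_f$ was already incident at $v$, so the new incident group $G_e\subset G_f$ is contained in an existing one; hence, by induction from $T_0$, every incident edge group at $H$ in any reduced tree of $\cald$ is $H$-conjugate into a subgroup of one of the original $E_i$. Once you state this explicitly your argument is complete and essentially reproduces the cited proposition. (Your detour through redundant vertices is correct but unnecessary here: the paper adopts ``no redundant vertex'' as a standing hypothesis on trees, so Linnell applies directly.)
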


More precisely: the subgroup $\Out(\cald)$ of $\Out(G)$ consisting of automorphisms leaving $\cald$ invariant acts on the set of reduced trees in $\cald$ 
with finitely many orbits. 

The   example below shows that $\Out(\cald)$ does not always act on the whole of $\cald$ with finitely many orbits, 
because  of non-reduced trees.  
It also shows that the number of deformation spaces  of $G$ over $\fini_2$  may be  
 infinite modulo $\Out(G)$.  In particular,  tree finiteness does not hold for splittings over $\fini_2$ (though it holds for splittings  over the trivial group).

\begin{example}\label{ex_Linnell}  Let $A$ be a one-ended group whose set of elements of order $2$ is not  a finite union of $\Aut(A)$-orbits (one can check that the lamplighter group  $(\Z/2\Z)\wr \bbZ$ 
is such a group, cf.\  Proposition 2.1 of \cite{GoWo_wreath}).
 Let $G=A*B$, with $B$ one-ended. 
Let  $\cald$ be the deformation space over $\fini_2$ containing the Bass-Serre tree of the defining 
 free product $G=A*B$. Here, $\Out(\cald)=\Out(G)$.
For any subgroup $F<A$ of order $2$, the Bass-Serre tree $T_F$ of the (non-reduced) two-edge graph of groups decomposition $G=A*_F F * B$ lies in $\cald$,
and  the trees $T_F$ are not contained in a finite union of $\Out(G)$-orbits as $F$ varies.
Moreover,  
the  one-edge splittings  $G=A*_F( {F*B})$
define  infinitely many   $\Out(G)$-orbits of deformation spaces as $F$ varies.
\end{example}

\begin{proof}[Proof of Lemma \ref{sk2}] 
Let $\Gamma=T/G$. By   accessibility,   the number of edges of $\Gamma$ is bounded.   To describe $\Gamma$, we need to know edge groups, vertex groups, and inclusions of edge groups into vertex groups. There are only finitely many possibilities for edge groups (up to isomorphism).
Vertex groups of $\Gamma$ with order $>k$ do not depend on $\Gamma$ for $T\in\cald$,
 so there are only finitely many possibilities for vertex groups of $\Gamma$ up to isomorphism.
To prove finiteness, it therefore suffices to show that there are only finitely many possibilities for the image of an edge group in a vertex group    of order $>k$, up to conjugacy. 

Fix a reduced $\Gamma_0=T_0/G$, with $T_0\in\cald$.  Since no edge group may be properly contained in a   conjugate of itself,   it follows from Proposition 4.9 of \cite{GL2} that  any vertex group $H$  of   $\Gamma_0$, with $H \notin\fini_k$,  contains  finitely many  subgroups  $E_i(H)\in \fini_k$ with the following property: given any \emph{reduced} $\Gamma=T/G$ with $T\in\cald$, each incident edge group  of the vertex group   $G_v=gHg\m$ of $\Gamma$ conjugate to $H$ is contained in a $G_v$-conjugate of some  $gE_i(H)g\m$. 
  The required finiteness follows since any vertex group   of $\Gamma$ of order $>k$ is conjugate to some $H$.
\end{proof}

\begin{rem} \label{retr}
 In Section 7 of \cite{GL2}, we have defined an  $\Out(\cald)$-invariant retract $\calg\inc\cald$. It consists of   trees $T \in\cald$ all of whose edges are surviving edges:   given any edge $e$, one can collapse $T$ to a reduced tree $T'\in\cald$ without collapsing $e$.  The same argument  as above shows that $\calg$  only contains  finitely many  trees, up to the action of  $\Out(\cald)$.
  This says that $\calg/\Out(\cald)$ is a finite complex with missing faces, or
equivalently that its spine is finite (see \cite{GL2}).

If $G$ is accessible, we can consider the Stallings-Dunwoody deformation space $\cald$ and its retract $\calg$. 
Edge stabilizers of trees   in $\calg$ all belong to some fixed $\fini_k$, so  
$\calg$ coincides with the retract of a deformation space over $\fini_k$, and $\calg/\Out(G)$ is  finite as above. 
\end{rem}

We now prove the first assertion of Theorem \ref{main}. By Linnell's accessibility, there is a (unique) 
  deformation space $\cald_k$  over $\fini_k$ such that vertex  stabilizers of trees in $\cald_k$ do not split over a group in $\fini_k$ (this is the JSJ deformation space over $\fini_k$, see \cite{GL3a} subsection 6.3).
   Recall that all  trees in $\cald_k$ have the same vertex  stabilizers of order $>k$.

Let $H=G_v$ be a vertex group of  a      splitting $\Gamma$  of $G$ over $\fini_k$.   As explained in Subsection \ref{rks}, we may assume that $\Gamma$ is reduced. By Lemma 4.8 of \cite{GL3a}, one may refine $\Gamma$ to a (JSJ) splitting $\Lambda$ in $\cald_k$. 
The refinement replaces the vertex $v$ by a subgraph of groups $\Lambda_v\inc \Lambda$ whose fundamental group is $H$. We may assume that $\Lambda_v$ is reduced (but not that $\Lambda$ is). We show that there are only finitely many possibilities for $\Lambda_v$. 

  The splitting $\Lambda$ is not necessarily reduced, so let $p:\Lambda\to\Lambda_0$ be a collapse map to a reduced  splitting in $\cald_k$. 
Since  $\Lambda_v$ is reduced,  the map $p$ does not collapse  any edge coming from $\Lambda_v$. In particular, the number of  
edges of $\Lambda_v$
is bounded by the number of edges of $\Lambda_0$, which is bounded by Lemma \ref{sk2}. 

 Vertex groups of $\Lambda$ of order $>k$ are vertex groups of $\Lambda_0$, so
there are finitely many possibilities for vertex and edge groups of $\Lambda_v$ up to isomorphism.
There remains to control inclusions $G_e\to G_u$ from edge groups of $\Lambda_v$ to vertex groups.  
We may assume that $G_u$ has order $>k$. This implies that the group 
carried by $p(u)$ in $\Lambda_0$ is $G_u$  (but the group carried by the other endpoint of $e$ may grow). 
Finiteness follows from the finiteness of possible images of incident edge groups in vertex groups  of graphs in $\cald_k$ as in the previous proof.

\section{One-edge splittings of one-ended groups} \label{unbout}

In this section we prove Assertions 2, 3, 4 of Theorem \ref{main} for one-edge splittings of one-ended groups. We will also prove  Theorem \ref{ttf} (see Subsection \ref{tf}). 

We   assume that $G$ is one-ended and we consider a one-edge splitting $\Gamma$.  As explained in Subsection \ref{rks}, we may assume that $\Gamma$ is  reduced (\ie minimal).
All splittings will be over groups in $\cala$ (necessarily infinite by one-endedness).   

We first explain how to obtain $\Gamma$  from a JSJ decomposition $\Gcan$ by refining and collapsing. We then discuss refining in general (Subsection \ref {reft}).   

\subsection{The canonical JSJ splitting}\label{cJSJ}
 
Let $T_c^*$
be the canonical JSJ tree over $\cala$ constructed in Theorems 11.1 and 13.1 of \cite{GL3b} (applied with $\calh=\es$), and $\Gcan$ the associated graph of groups.  In all cases  considered here it is the JSJ decomposition of $G$ over $\cala$ relative to all virtually polycyclic subgroups which are not virtually cyclic.
We summarize  the  relevant properties of $\Gcan$. 

 $\Gcan$ is not necessarily reduced (and may have redundant vertices). Its vertex groups $G_v$ are either maximal virtually polycyclic subgroups, or rigid, or QH with finite fiber.  If $G_v$ is rigid or QH, incident edge groups 
 $G_e$ are  maximal virtually abelian subgroups of $G_v$.

 If $G_v$ is rigid, it has no non-trivial splitting over groups in $\cala$ in which incident edge groups are elliptic. If $G_v$ is QH, there is an exact sequence $1\to F\to G_v\to\pi_1(\Sigma_v)\to 1$ where the fiber $F$ is finite and $\Sigma_v$ is a compact 2-dimensional orbifold. Incident edge groups are preimages of boundary subgroups of $\pi_1(\Sigma_v)$ (i.e.\ fundamental groups of boundary components of $\Sigma_v$),
 and conversely such preimages are incident edge groups (up to conjugacy). 

\begin{figure}[htbp]
  \centering
  \includegraphics{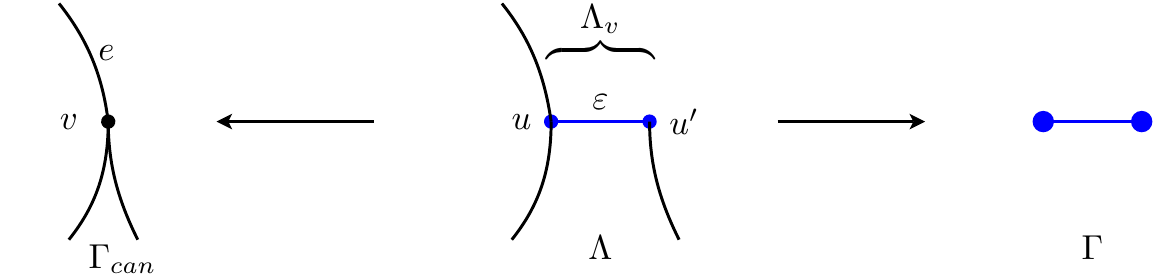}
  \caption{The common refinement $\Lambda$ of $\Gcan$ and $\Gamma$.}
  \label{fig_blowup}
\end{figure}

Moreover, $\Gcan$ is universally compatible. This means that, 
  given a  non-trivial one-edge splitting $\Gamma$ as above, there is a splitting   $\Lambda$ 
which collapses onto both $\Gamma$ and $\Gcan$. 
 It is minimal, but not necessarily reduced. After collapsing   edges in $\Lambda$, we may assume that no edge of $\Lambda$ is collapsed in both
$\Gamma$ and $\Gcan$.
Let $\eps$ be the edge of $\Lambda$ that is not collapsed in $\Gamma$  (see Figure \ref{fig_blowup}).
We can assume that $\eps$ is  collapsed in $\Gcan$, since otherwise  
$\Gamma$ is a collapse of $ \Gcan$,
and this  only produces finitely many  splittings.

Denote by $v$  the vertex of $\Gcan$ to which $\eps$ is collapsed,  and by $G_v$ the corresponding vertex group.  Let
  $\Lambda_v\subset \Lambda$ be the one-edge splitting  of $G_v$ associated to    $\eps$,
so that $\Lambda$ is obtained from $\Gcan$ by replacing the vertex $v\in \Gcan$ 
by the one-edge decomposition $\Lambda_v$ of $G_v$.

Note that $\Lambda_v$ can be a trivial decomposition 
(i.e.\ an amalgam of the form $G_v=G_v*_{G_\eps} G_\eps$).
This occurs precisely when $\Gcan$ and $\Lambda$ belong to the same deformation space.
In this case, the splitting $\Lambda$ is not  reduced.

\subsection{Refining a splitting}\label{reft}

Knowing $\Gcan$ and $\Lambda_v$  
is not enough to determine $\Lambda$ and $\Gamma$: 
one must also know how edges $e$ of $\Gcan$ incident to $v$ are attached to vertices of $\Lambda_v$  
(note that refining is possible only if all groups $G_e$ are elliptic in $\Lambda_v$). 
When $\Lambda_v$ has two vertices, one must first decide to which vertex $u$  of $\Lambda_v$ each edge $e$ is attached. This is a combinatorial choice, with only finitely many possibilities, so we will always assume that this choice has been made. One must then know, for each $e$, the injection of $G_e$ into $G_u$, and this is a possible cause of infiniteness. We demonstrate this on an example.

\subsubsection{Changing attachments}
\label{sec_heis} 

We construct a splitting $\Theta_0$ of a group $G$ such that  \emph{there are infinitely many ways to refine the Bass-Serre tree of $\Theta_0$ using a fixed one-edge splitting $\Lambda_v$ of a vertex group $G_v$} (see Figure \ref{fig_attachments}). 
This will also demonstrate that there is no vertex finiteness for abelian splittings of groups which are hyperbolic relative to nilpotent groups.

\begin{figure}[htbp]
  \centering
  \includegraphics[width=\textwidth]{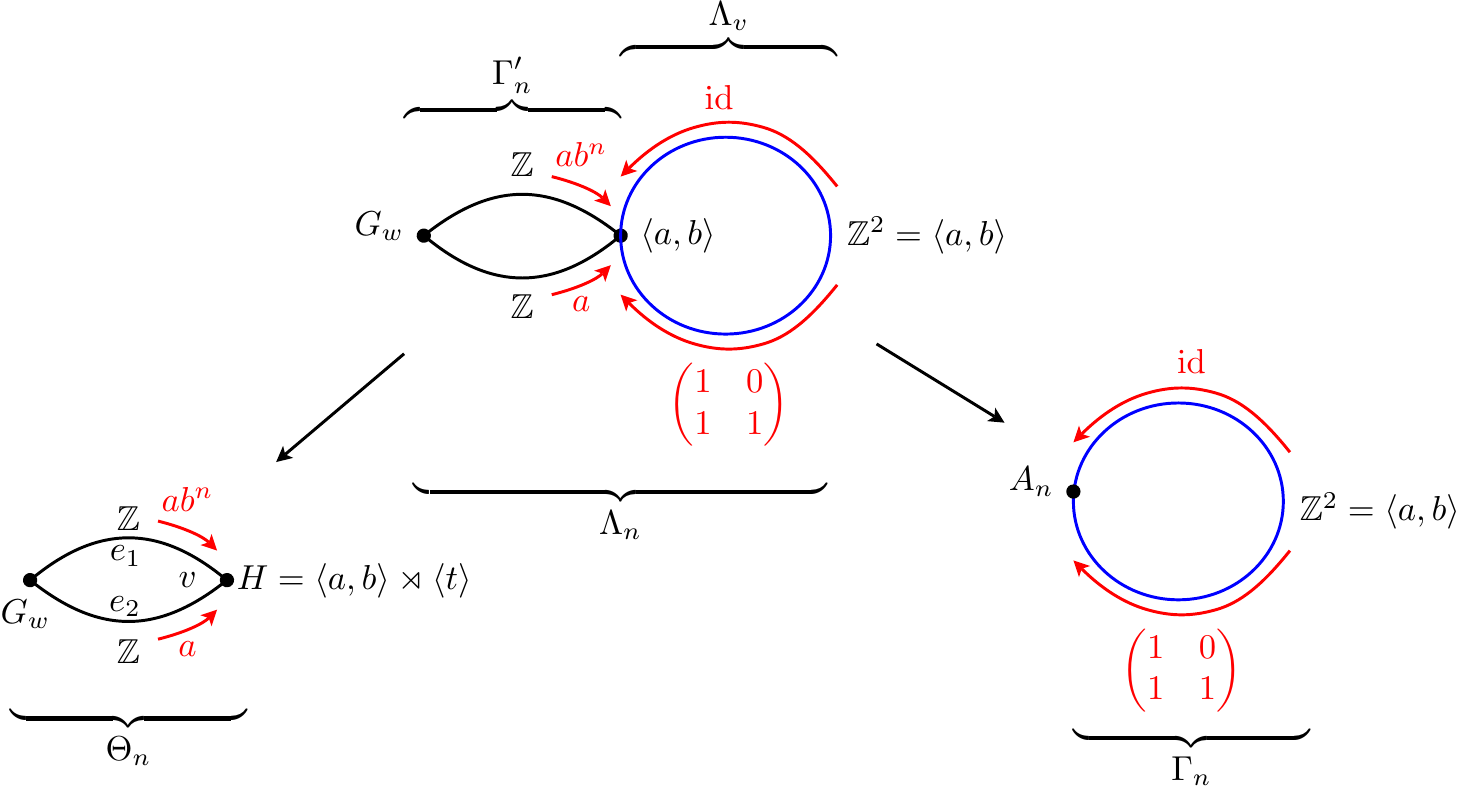}
  \caption{Infinitely many refinements of a graph of groups }
  \label{fig_attachments}
\end{figure}

 Let $H$ be the Heisenberg group, which we view as a semidirect product $\Z^2\rtimes\Z=\langle a,b,t\mid ab=ba, tat\m=ab, tbt\m=b\rangle$. The splitting $\Theta_0$ has two vertices $v,w$, with $G_v=H$  and $G_w$ a torsion-free hyperbolic group with no non-trivial cyclic splitting. They are joined by two edges $e_1,e_2$ carrying infinite  cyclic groups. The inclusions of edge groups into vertex groups 
map both $G_{e_1}$ and $G_{e_2}$ onto $\langle a\rangle$ in $H=G_v$, and they  map $G_{e_1},G_{e_2}$ onto non-conjugate maximal cyclic subgroups of $G_w$. The splitting $\Lambda_v$ of $G_v=H$ is the HNN extension associated to the semidirect product.
The group $G=\pi_1(\Theta_0)$ is hyperbolic relative to   the nilpotent group $H$ by \cite{Dah_combination}, and it may be checked that $\Theta_0$ is its 
 JSJ decomposition over abelian (or nilpotent)
groups relative to $H$.   

Let $\Lambda_0$ be obtained by refining $\Theta_0$ using $\Lambda_v$ in the obvious way.  Collapsing $e_1,e_2$ in $\Lambda$ yields an HNN extension $\Gamma_0$ with edge group $\Z^2=\langle a,b\rangle$. The base group $A_0$ is the fundamental group of the graph of groups  $\Gamma'_0$ obtained from   $\Theta_0$ by making  
 the group carried by $v$ equal to $\Z^2=\langle a,b\rangle$ rather than $H$.
 
Now let $n\in\N$. Consider $\Theta_0$ and define a new graph of groups $\Theta_n$ by
postcomposing   the inclusion   $G_{e_1}\to G_v$
with conjugation by $t^n$, an inner automorphism of $G_v$;  the image of $G_{e_1}$ is now generated by $t^nat^{-n}=ab^n $. 
  Since we changed the edge  monomorphism  by an inner automorphism of the vertex group,
$\Theta_n$ and $\Theta_0$ are equivalent (they are associated to the same Bass-Serre tree). 
Then construct $\Lambda_n$, $\Gamma_n$   and $\Gamma'_n$ as above.  

It is still true that $\Lambda_n$ refines $\Theta_n$, and the base group $A_n$ of the HNN extension $\Gamma_n$ is   the fundamental group of a graph of groups $\Gamma'_n$ with vertices carrying $\Z^2=\langle a,b\rangle$ and $G_w$.
But the inclusion of $G_{e_1}$ into $\Z^2$ now has image generated by $ ab^n$.   In particular, the subgroup of $\Z^2$ generated by incident edge groups  in $\Gamma'_n$ is $\langle a,ab^n\rangle$, it has index $n$. This shows that the splittings $\Gamma'_n$ (hence also the $\Lambda_n$'s) are distinct. Moreover, $\Gamma'_n$ is the canonical cyclic JSJ decomposition of $A_n$ relative to non-cyclic abelian groups, so  the   $A_n$'s are pairwise non-isomorphic.

In terms of trees, the minimal $H$-invariant subtree in the Bass-Serre tree  $T_0$ of $\Lambda_0$ is a line $L$. There are lifts of $e_1$ and $e_2$   attached to vertices of $L$. In the Bass-Serre tree  $T_n$ of $\Lambda_n$, the attachment point of a given lift  of $e_1$ gets  ``shifted'' by a translation of length $n$ along $L$: if 
$\tilde e_1,\tilde e_2$ are lifts of $e_1, e_2$ to $T_n$  having the same stabilizer, then their distance in $T_n$ is $n$.

\subsubsection{Practical description of a one-edge refinement}
We now explain how to describe  all one-edge refinements $\Lambda$ of a given graph of 
groups  $\Theta$ at a vertex $v$ (i.e.\ $\Lambda$ is obtained by refining  $\Theta$ at $v$ using a one-edge splitting). In the next subsection, we will take $\Theta$ to be the canonical JSJ decomposition $\Gcan$. We view $G_v$ as a subgroup of $G$, and, for each edge $e$ incident to $v$ in $\Theta$, we view $G_e$ as a subgroup of $G_v$.

By Bass-Serre theory, a graph of groups $\Lambda$ gives an action of a group $G_\Lambda$ on a tree $T_\Lambda$. We consider $\Lambda$ and $\Lambda'$  as  \emph{equivalent} if there is an isomorphism $\tau:G_\Lambda\to G_{\Lambda'}$ and a  $\tau$-equivariant isomorphism $T_\Lambda\to T_{\Lambda'}$.

\begin{lem}\label{lem_notations}
Up to equivalence, any  one-edge refinement $\Lambda$ of $\Theta$ at a vertex $v $ may be  obtained from the following data:
  \begin{enumerate}
  \item (marked splitting): an isomorphism $\phi:G_v\to\pi_1(\Lambda_v)$,
where $\Lambda_v$ is a
    one-edge splitting  
    (which may be a  trivial splitting $G_v*_{G_\varepsilon}G_\varepsilon$);
  \item (combinatorial attachment): when $\Lambda_v$ is an amalgam,  the choice of a vertex
    $u_e$ of $\Lambda_v$ for each oriented edge $e$ of $ \Theta$ incident to $v$;  
      \item (algebraic attachment): for each oriented edge $e$ of $ \Theta$ incident to $v$, 
    a   monomorphism $i_e:G_e\ra \varphi\m (G_{u_e})$ which is the restriction of   
     some inner automorphism $\ad_{g_e}\in \Inn(G_v)$.
  \end{enumerate}
\end{lem}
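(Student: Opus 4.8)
The plan is to unwind the definition of "refinement at a vertex" (Subsections~\ref{ts} and~\ref{reft}) in Bass--Serre terms and to keep track of the freedom left at each stage. The argument has two halves: that the data $(1)$--$(3)$ produce a one-edge refinement of $\Theta$ at $v$, and that every such refinement arises this way. The only point that is not pure bookkeeping is the role of the inner-automorphism condition in $(3)$, which I will make explicit. Throughout, write $j_e:G_e\hookrightarrow G_v$ for the inclusion prescribed by $\Theta$, for each oriented edge $e$ of $\Theta$ incident to $v$, so that $G_e$ is viewed as a subgroup of $G_v$.

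For the first implication, I would argue as follows. Given data $(1)$--$(3)$, use $\phi$ to identify $G_v$ with $\pi_1(\Lambda_v)$; the vertices of $\Lambda_v$ then carry the subgroups $\phi\m(G_u)\subset G_v$. The requirement that $i_e=\ad_{g_e}\circ j_e$ for some $g_e\in G_v$ says precisely that $g_eG_eg_e\m\subset\phi\m(G_{u_e})$, i.e.\ that $G_e$ is elliptic in $\Lambda_v$, fixing a vertex in the orbit of $u_e$; hence $\phi\circ i_e$ is a well-defined monomorphism from $G_e$ into the copy of $G_{u_e}$ inside $\pi_1(\Lambda_v)$. Now build the graph of groups $\Lambda$ by replacing $v$ in $\Theta$ with $\Lambda_v$, attaching each incident edge $e$ to the vertex $u_e$, declaring $\phi\circ i_e$ to be the new edge monomorphism $G_e\to G_{u_e}$, and leaving all other data of $\Theta$ and of $\Lambda_v$ unchanged. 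Collapsing the edge $\eps$ of $\Lambda_v$ turns $\Lambda$ back into a graph of groups with the same underlying graph, vertex groups and edge groups as $\Theta$, in which $e$ is attached to $v$ by $i_e=\ad_{g_e}\circ j_e$; since $\ad_{g_e}$ is inner in $G_v$, this graph of groups is isomorphic to $\Theta$. Therefore $\pi_1(\Lambda)\cong\pi_1(\Theta)=G$ and $\Lambda$ is a one-edge refinement of $\Theta$ at $v$. This is exactly the place where $(3)$ is used: for an arbitrary monomorphism $G_e\to\phi\m(G_{u_e})$ the collapse of $\Lambda$ would not return $\Theta$ (and $\pi_1$ could change).

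For the converse, suppose $\Lambda$ is a one-edge refinement of $\Theta$ at $v$. By definition there is a $G$-equivariant collapse $p:T_\Lambda\to T_\Theta$ collapsing exactly one orbit of edges, and by the description of collapses in Subsection~\ref{ts} the preimage of $v$ in $\Lambda$ is a one-edge subgraph of groups $\Lambda_v$ with $\pi_1(\Lambda_v)\cong G_v$ (it is the trivial splitting $G_v*_{G_\varepsilon}G_\varepsilon$ exactly when $\Lambda$ lies in the deformation space of $\Theta$). Pick a lift $\tilde v$ of $v$ in $T_\Theta$; the subtree $p\m(\tilde v)\subset T_\Lambda$ is a copy of the Bass--Serre tree of $\Lambda_v$, and choosing a maximal subtree and basepoints identifies $\pi_1(\Lambda_v)$ with $\Stab_G(\tilde v)$, which $\Theta$ identifies with $G_v$; this yields the isomorphism $\phi$ of $(1)$. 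In $\Lambda$ the edge $e$ is now attached to some vertex of $\Lambda_v$, which is the combinatorial datum $u_e$ of $(2)$. Finally, transporting the $e$-inclusion of $\Lambda$ through $\phi\m$ produces a monomorphism $G_e\to\phi\m(G_{u_e})\subset G_v$; since $p$ is $G$-equivariant, this monomorphism and $j_e$ attach $e$ to $\tilde v$ in the same way up to the action of $\Stab_G(\tilde v)=G_v$, so they differ by conjugation by some $g_e\in G_v$. This is the datum $i_e=\ad_{g_e}\circ j_e$ of $(3)$, and running the construction of the previous paragraph on $(\phi,(u_e)_e,(i_e)_e)$ returns a graph of groups equivalent to $\Lambda$.

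I expect the bookkeeping in the converse direction to be the main obstacle: there is no deep idea, but one must organise the choices (the lift $\tilde v$, the expanded subtree $p\m(\tilde v)$, a maximal subtree of $\Lambda_v$, basepoints) so that the extracted data is well defined up to the equivalence fixed before the lemma, and one must verify carefully the statement that the attaching vertex of a lift of $e$ into $p\m(\tilde v)$ is canonical only up to the $\Stab_G(\tilde v)$-action --- which is precisely the indeterminacy recorded by $\ad_{g_e}$, and which the example of Subsection~\ref{sec_heis} shows is genuine. Checking that the degenerate case $\Lambda_v=G_v*_{G_\varepsilon}G_\varepsilon$ (and more generally the situation where $\Lambda$ lies in the deformation space of $\Theta$) is handled by the same formalism is the remaining routine point.
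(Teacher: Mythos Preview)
Your proposal is correct and follows essentially the same route as the paper: build $\Lambda$ from the data by blowing up $v$ into $\Lambda_v$ and attaching via $i_e$, observe that the inner-automorphism condition is exactly what makes the collapse return $\Theta$; conversely, read off the data by looking at the $G_v$-invariant subtree $p^{-1}(\tilde v)\subset T_\Lambda$, with the indeterminacy in the attachment point of a lift of $e$ accounting for $\ad_{g_e}$. The paper's write-up is terser but the content is the same.
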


Different data may yield equivalent splittings. For instance, postcomposing $i_e$ with an inner automorphism of $\varphi\m (G_{u_e})$ does not change $\Lambda$.

\begin{proof}
Starting from the data, one
 constructs a graph
  of groups $\Lambda$ as follows.  The underlying graph is obtained from that of $\Lambda$ by blowing up $v$ into the one-edge graph underlying $\Lambda_v$, and attaching incident edges as prescribed by the combinatorial attachment data.

  The vertex groups are those of $\Theta\setminus \{v\}$, and preimages under $\varphi$ of those of 
  $\Lambda_v$; 
   the   edge groups are those of $\Theta$, and the preimage of the edge group of
  $\Lambda_v$; the   monomorphisms from edge groups to vertex groups     are the natural ones (those of $\Theta$ and $\Lambda_v$) and the $i_e$'s.
  Collapsing the edge of $\Lambda_v$ yields $\Theta$ (up to equivalence) because of 
  the  requirement that $i_e$ be the restriction of an inner automorphism.
  
Conversely,
  if $\Lambda$ is a one-edge refinement of $\Theta$, with Bass-Serre tree $T_\Lambda$, one defines $\Lambda_v$ as the one-edge splitting associated to the $G_v$-invariant subtree $T_v\inc T_\Lambda$ which is collapsed to a point $\tilde v$ in the Bass-Serre tree $T_{\Theta}$ of $\Theta$. 
  We fix an identification $\varphi
$ by choosing an edge in   $T_v$. In particular, this selects a vertex in each $G_v$-orbit of vertices of $T_v$ (there is one or two orbits, so one or two selected points $u,u'$). 

If  $e$ is an edge of $\Theta$ incident to $v$,
we view $G_e$ as the stabilizer of an edge $\tilde e$ of $T_{\Theta}$ incident to $\tilde v$. 
In $T_\Lambda$, this edge is attached to a vertex $v_e$  of $T_v$. The orbit of  $v_e$  determines the combinatorial attachment, 
 and $i_e$ is induced by $\ad_{g_e}$ with $g_e$   any element of $G_v$ taking this vertex $v_e$ to the selected vertex $u$ or $ u'$.
 \end{proof}

\begin{rem}\label{lesi}
  If we replace a marking $\phi:G_v\ra \pi_1(\Lambda_v)$ by $\varphi'=\varphi\circ \psi$, with $\psi$ an inner automorphism  of $G_v$, the  refinements of $\Theta$ by $\Lambda_v$ obtained using $\varphi'$  are (up to equivalence) the same as those obtained using $\varphi$  (one simply replaces $i_e$ by  $\psi\m\circ i_e$).  This holds, more generally, if $\psi $ acts on each incident edge group $G_e$ as conjugation by some $g_e\in G_v$.  
\end{rem}

\begin{rem}\label{lesi2}
If   $\phi\m(G_{u_e})$ is  almost malnormal in $G_v$, and $G_e$ is infinite, 
then the different choices for $i_e$ differ by an inner automorphism of $\phi\m(G_{u_e})$
and therefore lead to equivalent splittings $\Lambda$.
 The same conclusions hold if $G_e$ is contained in a unique conjugate of
$\phi\m(G_{u_e})$ in $G_v$, and $\phi\m(G_{u_e})$ is its own normalizer in $G_v$.
\end{rem}

\subsection 
{Vertex finiteness over virtually cyclic  groups}
\label{vfvc}

This subsection is devoted to the proof of:

\begin{prop}\label{cor_cv1b1a}
Let $G$ be one-ended, and hyperbolic relative to virtually polycyclic groups.
Then vertex finiteness holds for one-edge virtually cyclic splittings of $G$. 
\end{prop}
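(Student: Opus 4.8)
The plan is to use the canonical JSJ decomposition $\Gcan$ together with its universal compatibility, exactly as set up in Subsection \ref{cJSJ}. Given a one-edge virtually cyclic splitting $\Gamma$, I form the common refinement $\Lambda$ collapsing onto both $\Gamma$ and $\Gcan$, normalized so that no edge is collapsed in both, and let $\eps$ be the edge not collapsed in $\Gamma$. If $\eps$ is not collapsed in $\Gcan$, then $\Gamma$ is a collapse of $\Gcan$ and there are only finitely many such splittings; so I may assume $\eps$ collapses to a vertex $v$ of $\Gcan$, and $\Lambda$ is obtained by refining $\Gcan$ at $v$ using a one-edge splitting $\Lambda_v$ of $G_v$, whose edge group $G_\eps$ is virtually cyclic. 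The vertex group $G_u$ of $\Gamma$ that I want to control is the fundamental group of the graph of groups obtained from $\Gcan$ by replacing $v$ with $\Lambda_v$ and then collapsing all edges except (the image of) $\eps$; equivalently it is built from $G_v$-pieces and the neighboring $\Gcan$-vertex groups glued along $\Gcan$-edge groups.

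The first substantive step is to bound the number of pieces and fix their isomorphism types. The graph $\Gcan$ has boundedly many vertices and edges (it is canonical, hence unique up to isomorphism), so the number of $\Gcan$-vertex groups adjacent to $v$ is bounded; each such group, and each $\Gcan$-edge group, is a vertex/edge group of a fixed splitting of $G$ over $\cala$, hence lies in one of finitely many isomorphism classes (for edge groups this is Lemma \ref{relpolycy}(2); for vertex groups one uses that $\Gcan$ is a single fixed decomposition). For $G_v$ itself: if $G_v$ is maximal virtually polycyclic it contributes nothing new, and $\Lambda_v$ splits a virtually polycyclic group over a virtually cyclic group, which by Lemma \ref{relpolycy} gives finitely many possibilities for $\pi_1(\Lambda_v)=G_v$ and its pieces up to isomorphism. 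If $G_v$ is rigid, then since incident edge groups of $\Lambda_v$ must be elliptic in $\Gcan$-edge-incident data and $G_v$ is rigid, $\Lambda_v$ must be trivial, so $\eps$ is redundant and $\Gamma$ is essentially a collapse of $\Gcan$ — finitely many cases. The interesting case is $G_v$ QH: then $\Lambda_v$ comes from a simple closed curve (or arc) on the orbifold $\Sigma_v$, and there are only finitely many topological types of such splittings, so again finitely many isomorphism types for the pieces of $\Lambda_v$.

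Having fixed the isomorphism types of all vertex and edge groups of the (bounded) graph of groups presenting $G_u$, the remaining task is to control the gluing monomorphisms up to the moves that do not change $\pi_1$ (namely postcomposition by inner automorphisms of the target vertex group). For each $\Gcan$-edge $e$ incident to $v$ I invoke Lemma \ref{lem_notations}: the attachment of $e$ to a vertex of $\Lambda_v$ is a finite combinatorial choice, and the algebraic attachment $i_e$ is the restriction of an inner automorphism of $G_v$. Here I use almost malnormality heavily: the $\Gcan$-edge group $G_e$ is a maximal virtually abelian subgroup of $G_v$ when $G_v$ is rigid or QH, hence (essentially) its own normalizer and contained in a unique conjugate of the relevant piece, so by Remark \ref{lesi2} the choice of $i_e$ is irrelevant up to equivalence. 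For the new edge $\eps$ and the edge group $G_\eps$, the inclusion into the $G_v$-pieces is governed by the structure of $\Lambda_v$ (boundary subgroup of a QH vertex, or an explicit polycyclic splitting), where Lemma \ref{vc}(2) bounds the monomorphisms of virtually cyclic groups up to inner automorphism. Finally I must check that the inclusions of the $\Gcan$-edge groups into the \emph{neighboring} $\Gcan$-vertex groups — which are inherited unchanged from $\Gcan$ — contribute nothing new, since $\Gcan$ is fixed. Assembling: finitely many graphs, finitely many vertex/edge group isomorphism types, finitely many gluings up to $\pi_1$-preserving moves, hence finitely many isomorphism types for $G_u$.

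The main obstacle I expect is the QH case, and more precisely controlling how the $\Gcan$-edge groups incident to a QH vertex $v$ interact with the new splitting $\Lambda_v$: an incident $\Gcan$-edge group is a boundary subgroup of $\Sigma_v$, and after cutting $\Sigma_v$ along the curve defining $\Lambda_v$ this boundary subgroup lands in one of the two pieces, but I need to be sure that its inclusion (up to conjugacy in that piece) is determined by finitely many data and not, say, by an unbounded Dehn twisting along $\eps$ — this is exactly the phenomenon ruled out by almost malnormality of boundary subgroups in surface groups, so the key point is to verify carefully that the hypothesis of Remark \ref{lesi2} applies (boundary subgroups are malnormal, or at least each is contained in a unique maximal cyclic/virtually cyclic subgroup which is its own normalizer), and symmetrically that $G_\eps$ sits malnormally enough. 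The secondary difficulty is the bookkeeping when $\Lambda_v$ is a trivial splitting (so $\Lambda$ and $\Gcan$ share a deformation space and $\Lambda$ is non-reduced): in that situation $\Gamma$ is obtained from $\Gcan$ by a collapse composed with an elementary deformation, and one must check directly that this yields only finitely many vertex groups — but that reduces to the already-treated case of collapses of $\Gcan$ together with the finite-index/commensurator control in Lemma \ref{relpolycy}(3).
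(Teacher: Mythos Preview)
Your framework is exactly the paper's: universal compatibility of $\Gcan$ reduces everything to controlling one-edge refinements $\Lambda$ of $\Gcan$ at a vertex $v$, and you correctly invoke Lemma \ref{lem_notations} and Remarks \ref{lesi}, \ref{lesi2} to handle markings and algebraic attachments. The paper in fact proves the stronger Lemma \ref{lem_TF1} (finiteness of such $\Lambda$ up to $\Out(G)$, not just of vertex groups up to isomorphism) and deduces the proposition; your direct route to vertex finiteness is legitimate but leaves Theorem \ref{ttf} to be redone.

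There is, however, a genuine muddle in your treatment of the trivial-$\Lambda_v$ case. When $G_v$ is rigid you write that ``$\eps$ is redundant and $\Gamma$ is essentially a collapse of $\Gcan$''. This is false: $\Lambda$ is then non-reduced, but $\Gamma$ is obtained from $\Lambda$ by collapsing the \emph{other} edges, not $\eps$, so its vertex groups are fundamental groups of proper subgraphs of $\Lambda$ and need not be vertex groups of any collapse of $\Gcan$. You partly correct yourself in the last paragraph, calling this a ``secondary difficulty'' and pointing to Lemma \ref{relpolycy}(3), but you never supply the crucial ingredient: \emph{minimality of $\Lambda$ forces at least one $\Gcan$-edge $e_1$ to be attached to the small vertex $u$}, so $G_{e_1}$ sits (up to conjugacy) inside the virtually cyclic $G_\eps$ with finite index. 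Only then does Lemma \ref{relpolycy}(3) pin down $G_\eps$ up to finitely many choices, and Lemma \ref{vc}(2) finishes the $i_e$'s. This is the first bullet in the paper's proof of Lemma \ref{lem_TF1}, and it is not ``bookkeeping'': without the minimality observation there is no a priori bound on $G_\eps$.

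Two smaller points. In the virtually polycyclic (non-trivial $\Lambda_v$) case you are too quick: one must observe that the Bass-Serre tree of $\Lambda_v$ is a line, that $G'_\eps=\varphi^{-1}(G_\eps)$ is the kernel of the action on that line, and that $G'_\eps\cap G_e$ has index $\le 2$ in each incident $G_e$, whence Lemma \ref{polycy} controls $G'_\eps$; then an $\Out$-finiteness argument reduces the marking to finitely many cases. In the QH case your appeal to Remark \ref{lesi2} for the $i_e$'s is right, but the reduction of the marked splitting to finitely many cases needs the fact that homeomorphisms of $\Sigma_v$ fixing $\partial\Sigma_v$ induce automorphisms of $G_v$ acting on incident edge groups by conjugation (Remark \ref{lesi}); ``finitely many topological types of curves'' alone does not control the marking.
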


 We will actually prove:
\begin{lem}\label{lem_TF1} 
Let $G$ be one-ended, and hyperbolic relative to virtually polycyclic groups. Let 
$\Theta$ be a virtually cyclic splitting of $G$, and $v$ a vertex.
Up to the action of $\Out(G)$, there exist only finitely many minimal virtually cyclic splittings $\Lambda$ obtained by refining $\Theta$ at $v$ using a one-edge splitting  $\Lambda_v$ with the following property: if $G_v$ is not 
virtually polycyclic or QH with finite fiber (as defined in Subsection \ref{cJSJ}), then $\Lambda_v$ is a trivial amalgam  $G_v*_{G_\varepsilon}G_\varepsilon$.
\end{lem}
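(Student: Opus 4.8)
The plan is to apply Lemma~\ref{lem_notations}: a one-edge refinement $\Lambda$ of $\Theta$ at $v$ is determined, up to equivalence, by a marked one-edge splitting $\phi\colon G_v\to\pi_1(\Lambda_v)$, a combinatorial attachment $u_e$ for each oriented edge $e$ of $\Theta$ incident to $v$, and an algebraic attachment $i_e\colon G_e\to\phi\m(G_{u_e})$ which is the restriction of an inner automorphism of $G_v$, the last datum being considered up to postcomposition by an inner automorphism of $\phi\m(G_{u_e})$. So it suffices to bound, up to these equivalences and up to the subgroup of $\Out(G)$ preserving $\Theta$, the number of such data. Two families of automorphisms are available: inner automorphisms of $G_v$ (they are restrictions of inner automorphisms of $G$, so trivial in $\Out(G)$, yet can move the ``blow-up subgroup'' of $\Lambda_v$ within its $G_v$-conjugacy class), and automorphisms of $G_v$ acting as a conjugation on each incident edge group (they extend to $\Out(G)$ by Remark~\ref{lesi}). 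Recall that each $G_e$, and the edge group $G_\varepsilon$ of $\Lambda_v$, is infinite (by one-endedness) and virtually cyclic.

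First I would bound the marked splitting $\Lambda_v$, in the three cases permitted by the hypothesis.
\begin{itemize}
\item If $G_v$ is neither virtually polycyclic nor QH, then $\Lambda_v=G_v*_{G_\varepsilon}G_\varepsilon$ is trivial and $\phi$ is determined by the virtually cyclic subgroup $G_\varepsilon\leq G_v$. Minimality of $\Lambda$ forces the new vertex to have valence $\ge 2$, so some $G_e$ is conjugate into $G_\varepsilon$, hence is a finite-index subgroup of it; by Lemma~\ref{relpolycy}(3) there are only finitely many possibilities for $G_\varepsilon$ up to conjugacy in $G_v$, hence finitely many $\Lambda_v$ up to $\Out(G)$.
\item If $G_v$ is QH with finite fiber, one-edge virtually cyclic splittings of $G_v$ correspond to essential two-sided simple closed curves on the underlying orbifold $\Sigma_v$, up to isotopy; there are finitely many up to the mapping class group of $\Sigma_v$, which acts on $G_v$ through automorphisms that are conjugations (up to finitely many further choices) on each boundary subgroup, hence essentially extend to $\Out(G)$. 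So finitely many $(\Lambda_v,\phi)$ up to $\Out(G)$.
\item If $G_v$ is virtually polycyclic, then since a virtually polycyclic group of Hirsch length $n$ is a virtual duality group of dimension $n$, a nontrivial splitting over a virtually cyclic (hence Hirsch length $\le 1$) subgroup forces $n\le 2$; if $n\le 1$ only the trivial $\Lambda_v$ occurs, and otherwise $G_v$ is virtually $\Z^2$ and has only finitely many one-edge virtually cyclic splittings. Moreover, for the incident edge groups to be elliptic in $\Lambda_v$ they must all be conjugate into the vertex group of $\Lambda_v$, which pins down $\Lambda_v$ among finitely many possibilities (absorbing conjugacy into $\Out(G)$ as above).
\end{itemize}

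Having fixed $\Lambda_v$, there are finitely many combinatorial attachments $(u_e)$ since $\Theta$ is a finite graph. For the algebraic attachments: if $u_e$ is the vertex of $\Lambda_v$ carrying $G_v$, then $i_e$ is the inclusion up to an inner automorphism, hence unique. Otherwise $\phi\m(G_{u_e})$ is a proper vertex group of $\Lambda_v$. In the QH case it is a sub-orbifold group, which is almost malnormal in $G_v$ (boundary subgroups being malnormal in orbifold groups modulo the finite fiber), so Remark~\ref{lesi2} gives a unique $i_e$. In the virtually polycyclic and trivial cases $\phi\m(G_{u_e})$ is virtually cyclic and $i_e(G_e)$ is a finite-index subgroup of it; after conjugating (using Remark~\ref{lesi}) we may assume $G_e\subseteq\phi\m(G_{u_e})$, and then any conjugating element defining $i_e$ must commensurate $G_e$, hence lies in the maximal virtually polycyclic subgroup of $G_v$ containing $G_e$ (this subgroup being almost malnormal). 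Since the commensurator of $G_e$ acts on a finite-index subgroup of $G_e$ through a finite group (Lemma~\ref{vc}) and all elements of the centralizer of $G_e$ induce the same $i_e$, there remain only finitely many choices of $i_e$ up to postcomposition by inner automorphisms. Combining the finitely many choices for $\Lambda_v$, for $(u_e)$ and for $(i_e)$ yields the lemma.

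The main obstacle is exactly the control of the algebraic attachments $i_e$ in the virtually polycyclic and trivial cases: this is the phenomenon that fails in Subsection~\ref{sec_heis} once non-virtually-cyclic edge groups are allowed, where $\phi\m(G_{u_e})$ can be a non-abelian polycyclic group whose conjugation action on a cyclic edge group has infinite image, producing infinitely many pairwise non-equivalent refinements. What makes the argument go through here is that $\Lambda$ is virtually cyclic, so the edge group of $\Lambda_v$ and the vertex group $\phi\m(G_{u_e})$ are virtually cyclic; one then only has to keep track of the commensurator of $G_e$ in $G_v$, which is tame enough (it lies in an almost malnormal virtually polycyclic subgroup and acts almost trivially on $G_e$) to force finiteness.
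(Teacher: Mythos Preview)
Your overall approach mirrors the paper's: use Lemma~\ref{lem_notations}, split into cases according to the type of $G_v$, bound the marked splitting first and then the algebraic attachments. The trivial-amalgam case and the QH case are essentially right (one quibble: sub-orbifold groups are \emph{not} almost malnormal in $G_v$---two pieces of a separating-curve splitting intersect in the infinite curve group---so in the QH case you should invoke the second clause of Remark~\ref{lesi2} rather than almost malnormality).

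The genuine gap is in the virtually polycyclic case. Your claim that a virtually~$\Z^2$ group ``has only finitely many one-edge virtually cyclic splittings'' is false as stated: $\Z^2$ has one such splitting for each primitive vector, hence infinitely many. What is true is that they are all equivalent \emph{under $\Aut(G_v)$}, and this is presumably what you mean when you write ``absorbing conjugacy into $\Out(G)$''. But an automorphism $\psi$ of $G_v$ extends to an automorphism of $G$ preserving $\Theta$ only if it acts on each incident edge group $G_e$ as a $G_v$-conjugation (Remark~\ref{lesi}), and you never check this. The paper does exactly this work: it first shows (via Lemma~\ref{polycy}) that the kernel $G'_\varepsilon=\phi^{-1}(G_\varepsilon)$ takes only finitely many values, because $G'_\varepsilon\cap G_e$ has index~$\le2$ in $G_e$ and finite index in $G'_\varepsilon$; then, with $G'_\varepsilon$ fixed, it proves that the subgroup of $\Aut(G_v)$ preserving $G'_\varepsilon$ and acting as a conjugation on every $G_e$ has \emph{finite index} in the group of all automorphisms preserving $G'_\varepsilon$. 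This finite-index claim is the crux of the case and requires a separate argument (using that $\psi(G_e\cap G'_\varepsilon)$ has fixed index in $G'_\varepsilon$, then Lemma~\ref{polycy}, then finiteness of $\Out(G_e)$). Without it, your reduction to finitely many marked splittings modulo $\Out(G)$ does not go through.

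Your treatment of the algebraic attachments $i_e$ in the virtually cyclic target case is also more tangled than necessary: rather than reasoning about commensurators, the paper simply observes that $[G_u:i_e(G_e)]$ is bounded by Lemma~\ref{relpolycy}(3) and then applies Lemma~\ref{vc}(2) directly to conclude there are finitely many monomorphisms $G_e\to G_u$ of bounded-index image up to inner automorphisms. Your appeal to ``Lemma~\ref{vc}'' for the statement that the commensurator acts through a finite group is not what that lemma says.
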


   The lemma implies the proposition  because, as explained in Subsection \ref{cJSJ}, any one-edge virtually cyclic splitting of $G$ is a collapse of a one-edge refinement of $\Theta=\Gcan$; vertex groups   of $\Gcan$ which are not 
virtually polycyclic or QH with finite fiber are rigid, so can only be refined using a trivial amalgam.

 In Subsection \ref{tf} we will explain that the lemma 
 yields tree finiteness  
 for one-edge virtually cyclic splittings, and we will use it to prove  
 Theorem \ref{ttf}   (tree finiteness for arbitrary virtually cyclic  splittings). 
 
 \begin{proof}[Proof of Lemma  \ref{lem_TF1}]
  We use the notations of Lemma \ref{lem_notations},  and we denote by $G_\varepsilon$ the edge group of $\Lambda_v$.
We   assume (when $\Lambda_v$ has two vertices) that the combinatorial choice (deciding to which vertex of $\Lambda_v$ edges of $\Theta$ incident to $v$ will be attached) has been made. We must prove that varying   
the marked splitting and the $i_e$'s does not produce infinitely many $\Lambda$'s.
We distinguish  several cases, depending on the nature of $\Lambda_v$.

$\bullet$
First  suppose that $\Lambda_v$ is a trivial amalgam.  In this case we may assume that $\Lambda_v$ is 
$G_v=G_v*_{G_\eps} G_\eps$  for some virtually cyclic $G_\varepsilon\inc G_v$,  and $\varphi$ is the identity (the marked splitting is determined by $G_\varepsilon$, and changing $\varphi$ amounts to changing  $G_\varepsilon$ by an automorphism of $G_v$). 
Call $u,u'$ the vertices of $\Lambda_v$, with vertex groups $G_{u }=G_\eps$ and $G_{u'}=G_v$. 

 By minimality of $\Lambda$, at least one edge $e_1$ of $\Theta$ is attached to $u$. Thus $G_{e_1}$ is contained in a conjugate of $G_u=G_\varepsilon$. The groups $G_{e_1}$ and $G_\varepsilon$ are both infinite and virtually cyclic, so the index is finite. Since  by Remark \ref{lesi} the set of refinements does not change if we   replace $G_\varepsilon$ by a conjugate,
  Assertion 3 of Lemma \ref{relpolycy} (applied in $G_v$) lets us assume that  $G_\varepsilon$  
 is fixed (because there are only finitely many possibilities for $G_\varepsilon$ up to conjugacy). 
 
 We must now vary the maps $i_e$. 
For edges $e$ attached to $u'$, the choice of $i_e$ is irrelevant. For edges attached to $u$, Lemma \ref{relpolycy}  provides a bound for the index  $[G_u:i_e(G_e)]=[\ad_{g_e}\m( G_u): G_e]$.   By Assertion 2 of Lemma \ref{vc}, there are only finitely many possibilities for $i_e$  (up to inner automorphisms of $G_e$ or $G_u$).

$\bullet $  Now suppose that  $\Lambda_v$ is  non-trivial and  
 $G_v$ is virtually polycyclic.
The Bass-Serre tree of $\Lambda_v$ is a line $L$ on which $G_v$ acts 
by translations or dihedrally ($\Lambda_v$ is an HNN extension or an amalgam accordingly).
Note that, since $G_\eps$ is virtually cyclic, $G_v$ is virtually $\bbZ^2$.

The group   $G'_\eps=\phi\m(G_\varepsilon)\inc G_v$ is the set of elements acting as the identity on $L$. 
It is normal, with quotient   $\Z$ or the 
  infinite dihedral group $D_\infty$.   Given an incident edge group $G_{e}<G_v$ in $\Theta$,
the intersection $G'_\eps\cap G_{e}$ has index at most $2$ in $G_{e}$,  hence finite index in $G'_\eps$,  
so by Lemma \ref{polycy} there are only finitely many possibilities for  $G'_\eps$ as $\Lambda_v$ and $\varphi$ vary. 

Once $G'_\varepsilon$ is fixed, $\Lambda_v$ is
determined (up to  an equivalence, which is not necessarily relative to the incident edge groups)
because $\bbZ$ and $D_\infty$ only have one non-trivial
one-edge splitting.
This means that we can fix $\Lambda_v$ and a marking $\varphi_0$, and restrict to markings given by $\phi=\phi_0\circ \psi$ for some automorphism $\psi$ of $G_v$
preserving $G'_\eps$.

Recall (Remark \ref{lesi})  that the set of refinements associated to $\varphi$ (obtained by varying the $i_e$'s) is the same as for $\varphi_0$ if $ \psi$ acts on each incident edge group $G_e$ as conjugation by some $g_e\in G_v$.  We claim that the group consisting of the automorphisms  with this property has finite index in the group $A$ of all automorphisms $ \psi$ of $G_v$ preserving  $G'_\eps$.  

Given an  incident edge group $G_e$, one has $[G'_\eps:\psi(G_e\cap G'_\eps)]=[G'_\eps:(G_e\cap G'_\eps)]<\infty$, 
so $\psi(G_e\cap G'_\eps)$ takes only finitely many values as $\psi$ varies  in $A$.
By Lemma \ref{polycy}, there are only finitely many possibilities for $\psi(G_e)$ up to conjugacy in $G_v$.  
Replacing $A$ by finite index subgroups, we may arrange that $\psi(G_e)$ be conjugate to $G_e$, and then that $\psi_{ | G_e}$ be the restriction of an inner automorphism of $G_v$ since $\Out(G_e)$ is finite. Arguing in this way for each incident edge proves the claim.

The claim lets us   assume that $\phi$ is fixed.
The last thing to do is to   vary  the maps $i_e$. Finiteness is proved as in the previous case, since 
vertex groups of $\Lambda_v$ are virtually cyclic.

$\bullet  $ In the remaining case, 
$v$  is   a QH vertex of $\Theta$ and $\Lambda_v$ is  non-trivial. 
Then   (see \cite[Lemma 7.4]{GL3a})
$\Lambda_v$ is dual to a simple closed 1-suborbifold $\gamma$  
on the underlying 2-orbifold   $\Sigma_v $  (if $G$ is torsion-free, $\gamma$ is a curve on a surface). 
Up to a homeomorphism $f$ of $\Sigma _v$ equal to the identity on $\partial\Sigma_v $, 
there are only finitely many possible $\gamma'$s. 

First suppose that $G$ is torsion-free. Then $\Sigma_v$ is a surface and any $f$ as above induces an automorphism of $G_v$  acting on incident edge groups as a conjugation.  By Remark \ref{lesi}, we may therefore assume that the  marked splitting  
$\varphi:G_v\ra\pi_1(\Lambda_v)$ is fixed.  
The choice of the $i_e$'s is irrelevant by Remark \ref{lesi2}.

The argument is the same if there is torsion, noting 
that the group of automorphisms of $\pi_1(\Sigma_v)$ which  are induced by  an automorphism of $G_v$
has finite index in the group of all automorphisms, see \cite{DG2}.
\end{proof}

\subsection{Vertex finiteness over abelian groups}

\begin{prop}
  Assume $G$ and $\cala$ are as in Assertion 3 or 4 of Theorem \ref{main}.
Assume moreover that $G$ is one-ended.

Then vertex finiteness holds for one-edge splittings  $\Gamma$ of $G$ over $\cala$.
\end{prop}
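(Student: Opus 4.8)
The strategy mirrors the virtually cyclic case (Proposition~\ref{cor_cv1b1a} / Lemma~\ref{lem_TF1}), replacing Lemma~\ref{vc} and Lemma~\ref{relpolycy} by the ``sandwich'' results Lemma~\ref{sand} and Corollary~\ref{cor_is}. As in Subsection~\ref{cJSJ}, fix the canonical JSJ splitting $\Gcan$ of $G$ over $\cala$ (relative to the non-virtually-cyclic maximal virtually polycyclic subgroups, which are now abelian). Any one-edge splitting $\Gamma$ of $G$ over $\cala$ is a collapse of a one-edge refinement $\Lambda$ of $\Gcan$ at some vertex $v$, obtained by blowing up $v$ using a one-edge splitting $\Lambda_v$ of $G_v$; write $\Lambda_v$ via the data of Lemma~\ref{lem_notations} (a marked splitting $\varphi\colon G_v\to\pi_1(\Lambda_v)$, combinatorial attachments, and algebraic attachments $i_e$). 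Since $G_v$ is a vertex group of a splitting over $\cala$ it satisfies the hypotheses of the theorem (Subsection~\ref{rks}), so we may argue inside $G_v$. The vertex group $G_v$ is either a maximal abelian subgroup, or rigid, or QH with finite fiber; a rigid vertex admits only the trivial refinement, so it suffices to show that varying the marked splitting and the $i_e$'s over the remaining two types of vertex produces only finitely many $\Lambda$'s up to $\Out(G)$, and in particular finitely many isomorphism types for the new vertex groups.

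\textbf{Key steps.} First, the QH case: exactly as in the last bullet of the proof of Lemma~\ref{lem_TF1}, $\Lambda_v$ is dual to a simple closed $1$-suborbifold $\gamma$ on the underlying orbifold $\Sigma_v$, there are finitely many such $\gamma$ up to a homeomorphism fixing $\partial\Sigma_v$, such a homeomorphism induces an automorphism of $G_v$ acting on incident edge groups by conjugation (and the group of orbifold automorphisms realized inside $\Aut(G_v)$ has finite index, by \cite{DG2}), so by Remark~\ref{lesi} we may fix $\varphi$, and the choice of the $i_e$'s is then irrelevant by Remark~\ref{lesi2} since edge groups of $\Lambda_v$ are virtually cyclic hence almost malnormal (or by Remark~\ref{lesi} directly). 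Second, and the heart of the proof, the case where $G_v$ is a maximal abelian subgroup, say a finitely generated abelian group $P$ (possibly with torsion in Assertion~3, torsion-free in Assertion~4). Here a one-edge splitting of $P$ over a subgroup $G_\varepsilon\in\cala$ is necessarily an amalgam $P = P*_{G_\varepsilon}G_\varepsilon$ (abelian groups do not split as HNN extensions or nontrivial amalgams over their subgroups of smaller corank without this being trivial; more precisely a minimal abelian-over-abelian one-edge splitting of an abelian group is the trivial amalgam), so $\Lambda_v$ is a trivial amalgam and we are in the situation of the first bullet of Lemma~\ref{lem_TF1}. Call $u,u'$ the vertices of $\Lambda_v$ with $G_u = G_\varepsilon$, $G_{u'}=P$. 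By minimality some incident edge $e_1$ is attached to $u$, so $G_{e_1}\subset G_\varepsilon$ (after conjugation, but in the abelian group $P$ conjugation is trivial, so honestly $G_{e_1}\subset G_\varepsilon$). Thus $G_\varepsilon$ is a subgroup of $P$ sandwiched between the fixed subgroup $G_{e_1}$ and $P$; by Lemma~\ref{sand} there are only finitely many such $G_\varepsilon$ up to an isomorphism fixing $G_{e_1}$, and one checks (using that an automorphism of $P$ fixing $G_{e_1}$ acts trivially, hence by conjugation, on every incident edge group) that via Remark~\ref{lesi} this reduces us to finitely many choices of $\Lambda_v$ with $\varphi$ fixed. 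Finally, for the algebraic attachments: for edges attached to $u'=P$ the map $i_e$ is irrelevant; for edges attached to $u$, $i_e$ is the restriction of an inner automorphism of $P$, hence the identity, so $i_e$ is just the inclusion $G_e\subset G_\varepsilon$ — there is nothing to vary. Hence there are finitely many $\Lambda$'s, and in particular finitely many vertex groups $\pi_1(\Lambda_v)$-preimages, up to isomorphism; collapsing down to $\Gamma$ introduces only finitely many further vertex groups of the shape $G_0*_{A}B$ with $A<B<P$, which by Corollary~\ref{cor_is} fall into finitely many isomorphism classes. Together with the (finitely many) rigid and QH vertex groups of $\Gcan$, this gives vertex finiteness.

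\textbf{Main obstacle.} The delicate point, exactly as in the nilpotent counterexample of Subsection~\ref{sec_heis}, is controlling the algebraic attachments: an inner automorphism of the vertex group $G_v$ can ``shift'' an attachment point and change the isomorphism type of a collapsed vertex group. In the abelian case this obstacle evaporates precisely because $\Inn(P)$ is trivial, so $i_e$ has no freedom at all; the real content is therefore the abelian ``sandwich lemma'' Lemma~\ref{sand} together with the observation that automorphisms of $P$ fixing one incident edge group act by conjugation (i.e. trivially) on all incident edge groups, allowing the Remark~\ref{lesi} reduction. The one genuine subtlety to verify carefully is the classification of one-edge $\cala$-splittings of a finitely generated abelian group: one must confirm that any such minimal splitting is a trivial amalgam (so that $\Lambda_v$ contributes no new structure), which follows since the abelianization of $\pi_1$ of any nontrivial graph of groups with underlying graph a loop or an edge with $G_\varepsilon\subsetneq P$ at both ends cannot be $P$ unless the splitting is trivial. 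Handling torsion in Assertion~3 requires only that Lemma~\ref{sand} already accounts for torsion subgroups (which it does, via the root-closure $e(A,B)$), so no extra work is needed there.
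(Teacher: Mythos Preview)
There are two genuine gaps.

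First, your classification of one-edge splittings of a finitely generated abelian group $P$ is wrong. The group $P$ \emph{does} admit non-trivial minimal one-edge splittings: for any epimorphism $P\to\Z$ with kernel $K$, the HNN extension $K*_K$ (with both edge maps the identity) has fundamental group $K\times\Z\simeq P$, and its Bass-Serre tree is a line on which $P$ acts by translations. Your abelianization argument fails here precisely because the edge group has corank $1$ and the stable letter supplies the missing $\Z$. The paper treats this case explicitly (its third bullet): when $G_v$ is abelian and $\Lambda_v$ is non-trivial, one has $A\subset G_\varepsilon\subset G_v$ with $A$ the subgroup generated by all incident edge groups, and the vertex group of $\Gamma$ is $G_0*_A G_\varepsilon$; Corollary~\ref{cor_is} then gives finitely many isomorphism types even though $G_\varepsilon$ itself ranges over infinitely many kernels.

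Second, and more structurally, you aim to show that there are only finitely many refinements $\Lambda$ up to $\Out(G)$. This is false: Example~\ref{to} exhibits a one-ended toral relatively hyperbolic $G$ with infinitely many one-edge abelian splittings modulo $\Out(G)$. Concretely, in that example $G_v=\Z^3$ and $G_\varepsilon=\langle a_1,b\rangle$ runs over infinitely many rank-$2$ subgroups containing the incident edge group $\langle a_1\rangle$; your appeal to Lemma~\ref{sand} gives an isomorphism $G_\varepsilon\to G_\varepsilon'$ fixing $\langle a_1\rangle$, but this is \emph{not} in general the restriction of an automorphism of $P$, and even when it is, such an automorphism need not act trivially on the other incident edge groups, so Remark~\ref{lesi} does not apply. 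The paper therefore abandons tree finiteness entirely and instead computes the vertex groups of $\Gamma$ directly: in the trivial-amalgam abelian case it writes them as $G_0*_A G_\varepsilon$ with $G_0$ and $A$ determined by the (finitely many) combinatorial attachments, and invokes Corollary~\ref{cor_is}. You should reorganize your abelian case along these lines. (A minor further omission: you do not handle the case where $\Lambda_v$ is a trivial amalgam at a non-abelian vertex $v$; the paper dispatches this using that incident edge groups at rigid and QH vertices of $\Gcan$ are \emph{maximal} virtually abelian, forcing $G_\varepsilon$ to be conjugate to some $G_{e_1}$.)
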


Recall that, in Assertion 3 of Theorem \ref{main}, 
$G$ is hyperbolic relative to finitely generated abelian groups, and   $\cala$ is the family
of virtually abelian   (i.e.\ abelian or virtually cyclic) groups. In Assertion 4,  $G$ is a finitely generated, torsion-free CSA group whose abelian subgroups are finitely generated of bounded rank, and  $\cala$ is the family of abelian subgroups. 

Since Proposition \ref{cor_cv1b1a} gives vertex finiteness for one-edge virtually cyclic splittings of $G$ as   in Assertion 3,
we may assume that the edge group of $\Gamma$ is abelian. 
As in the previous subsection,  
  we assume that the choice of combinatorial attachment has been made and we distinguish several cases. We use the same notations whenever possible. 

$\bullet$  $\Lambda_v$ is a trivial 
amalgam $G_v=G_v*_{G_\eps} G_\eps$, and $G_v$ is not abelian.
For each edge $e$ of $\Gcan$ incident to $v$, the group $G_e$ is a maximal  virtually abelian subgroup of $G_v$.   
By minimality, at least one edge $e_1$  has to be attached to $u$, and $G_\varepsilon$  is conjugate to $G_{e_1}$
since $G_{e_1}\subset G_\eps^g\subset G_v$ and $G_\varepsilon$ is abelian. 
 This means that   $G_\eps$ is uniquely determined (up to conjugacy), so the marked splitting is determined (up to an inner automorphism of $G_v$). The choice of the $i_e$'s (algebraic attachment) is controlled by Remark \ref{lesi2} since 
$G_u=G_\varepsilon$  is  almost malnormal 
in $G_v$
  (as a maximal virtually abelian subgroup of a group as in Assertion 3 or 4 of Theorem \ref{main}).

$\bullet$ $\Lambda_v$ is   trivial,  
and $G_v$ is abelian.     As $G_v$ is abelian, there is no choice for the algebraic attachment.  
Finiteness will be deduced from Corollary \ref{cor_is}.

As above we denote by $u,u'$ the vertices of $\Lambda_v$
with $G_u=G_\eps$ and $G_{u'}=G_v$.
Let  $A\inc G_\varepsilon$ be the subgroup of $G_v$ generated by the  groups $G_e$ carried by edges attached to $u$ 
(this group is meaningful because  $G_v$ is   abelian; otherwise, it  changes if $G_e$ is replaced by a conjugate). Since the combinatorial attachment  is fixed, $A$ is independent of the choice of $G_\varepsilon$.

If $\eps$ does not   disconnect $\Lambda$ (\ie if $\Gamma$ is an HNN-extension), 
let $G_0$ be  the fundamental group of the graph of groups obtained from $\Lambda$ by removing the interior of 
$\varepsilon$ and changing the group carried by  $u $ from  $G_{u }=G_\varepsilon$ to $A$. 
It does  not depend   on the choice of $G_\eps$. The vertex group of $\Gamma$ is  $G_0*_A G_\eps$. By Corollary \ref{cor_is}  (applied with $P=G_v$), there are finitely many possibilities up to isomorphism.  The argument when 
 $\eps$ separates $\Lambda$ is similar.

$\bullet$ 
$\Lambda_v$ is not trivial. Then $G_v$ cannot be   rigid. The case when  it is     QH or virtually cyclic   
 is dealt with as in the previous subsection,  
so the only remaining possibility is when $G_v$ is abelian. 
 The groups carried by edges   incident to $v$ in $\Gcan$ generate a subgroup $A\inc G_v$.
 The Bass-Serre tree of $\Lambda_v$ is a line on which $G_v$ acts by translations, and $\Lambda_v$
is an HNN-extension $G_v=(G_\eps)*_{G_\eps}$  with $A\inc G_\eps\inc G_v$.
 Thus  $\Gamma$ is an HNN-extension, and its vertex group is isomorphic to $H=G_0*_A G_\eps$,
where  $G_0$ is the fundamental group of the graph of groups obtained from $\Gcan$ by changing the group carried by $v$ from  $G_v$ to $A$.  
 
The group $G_\varepsilon$    is the kernel of an epimorphism $G_v\ra \bbZ$ vanishing on $A$. 
Since there may be many such epimorphisms,  
there may be  many possibilities for the group $G_\eps\subset G_v$. 
 But Corollary \ref{cor_is} says that the isomorphism type of  $H=G_0*_A G_\varepsilon$ 
  only depends on the equivalence class of $G_\varepsilon$ as defined in Lemma \ref{sand}, 
so there are only finitely many possibilities for $H$.

\subsection{Tree finiteness  
}\label{tf}

We  
prove Theorem \ref{ttf}, i.e.\
tree finiteness for   virtually cyclic splittings $\Gamma$ of $G$ when
 $G$ is one-ended, and hyperbolic relative to virtually polycyclic groups.  We assume, of course, that $\Gamma$ has no redundant vertices.

By universal compatibility of $\Gcan$, any   splitting $\Gamma$ (possibly with several edges) may be obtained by collapsing a refinement of $\Gcan$. It therefore suffices to prove finiteness up to $\Out(G)$ for splittings $\Gamma$ which refine $\Gcan$. 

When $\Gamma$ has just one more edge than $\Gcan$, we simply apply Lemma \ref{lem_TF1} to $\Gcan$, noting that any $G_v$ which is not virtually polycyclic or QH is rigid, hence elliptic in $\Gamma$. 

In general, we pass from $\Gcan$ to $\Gamma$ by a finite sequence of one-edge refinements. Refining a QH vertex yields vertex groups which are virtually cyclic or QH, so Lemma \ref{lem_TF1} applies to each intermediate splitting. It is therefore enough to find a \emph{uniform bound (depending only on $G$) for the number of edges of $\Gamma$}  (we cannot apply \cite{BF_bounding} because $\Gamma$ does not have to be reduced in the sense of \cite{BF_bounding},  see below). We denote by $T $, $\Tcan$ the Bass-Serre trees of $\Gamma$, $\Gcan$ respectively.

We may factor the collapse map  $\pi:\Gamma\to  \Gcan$  through a   splitting $\Gamma'$, belonging to the same deformation space as $\Gamma$, such that 
the preimage of any vertex $v$ of $\Gcan$ in $\Gamma'$ is  a \emph{minimal} graph of groups:
   we obtain $\Gamma'$ from $\Gamma$ by collapsing edges of $\pi\m(v)$ associated to edges of $T $  not belonging to  the minimal subtree of  a conjugate of $G_v$ (if $G_v$ is elliptic in $\Gamma$, we collapse the whole of $\pi\m(v)$).
 
Let $T'$ be the Bass-Serre tree  of $\Gamma'$, and 
 $p:T' \ra \Tcan$    the   induced collapse map.  
We first claim that the number of edges of $\Gamma'$ is uniformly bounded. To prove this, we   consider a vertex $v$ of $\Tcan$ such that $T_v=p\m(v)$ is not a point, and we have to bound  the number of edges  of $T_v/G_v$. Note that $G_v$ is not rigid, so it is virtually polycyclic or QH, and 
the action of $G_v$ on $T_v$ is minimal. If the action of $G_v$ on $T_v$ has no redundant vertex, the number of edges of $  T_v/G_v$ is 1 if $G_v$ is polycyclic ($T_v$ is a line), bounded in terms of the orbifold $\Sigma_v$ if $G_v$ is QH. In general there may be
 redundant vertices, but such vertices have edges of $\Tcan$ attached to them, so the number of $G_v$-orbits of redundant vertices is bounded by the valence of the image of $v$ in $\Gcan$. 
This proves the claim.

Now let $q:\Gamma\ra \Gamma'$ the collapse map. We consider a vertex  $v'\in \Gamma'$, and we bound the number of edges of  $q\m(v')$. Since $\Gamma$ and $\Gamma'$ belong to the same deformation space, the group 
 $G_{v'}$ is elliptic in $\Gamma$, so 
 $q\m(v')$ is a finite tree of groups $\Lambda_{v'}$ with a vertex $w$ carrying the same group as $v'$. 
By minimality of the splitting $\Gamma$, the number of terminal vertices of $\Lambda_{v'}$ 
is bounded by the valence of $v'$ in $\Gamma'$. 
It therefore suffices to bound the length of a segment $S\inc \Lambda_{v'}$ consisting of vertices of valence 2  (these vertices make $T$ non-reduced in the sense of \cite{BF_bounding}). 

All edge stabilizers of $\Gamma$ are infinite and virtually cyclic. 
The stabilizer of any edge in $\Lambda_{v'}$ contains (with finite index) the stabilizer of an edge of $\Gamma'$. 
Moreover, if we orient $S$ towards $w$, the sequence of edge stabilizers is strictly increasing as one moves along $S$. 
Applying Assertion 3 of Lemma \ref{relpolycy} to the edge stabilizers of   $S$ then gives the required bound.

\section{One-edge splittings of arbitrary groups}
 
In this section we  prove Assertions 2, 3, 4 of Theorem \ref{main} for one-edge splittings of a group $G$ with infinitely many ends.  In all cases there is a bound for the order of finite subgroups of $G$, and $G$ is accessible.

\begin{lem}[{Compare  
\cite[Lemma 4.22]{DG2}, \cite[Theorem 18]{Wilton_one-ended}}]  
\label{dom}
Let $G$ be an accessible group with infinitely many ends. Let $C$ be a finitely generated group with finitely many ends. If $G$ splits over $C$, there is a non-trivial splitting $\Gamma'$ of $G$ over a finite group in which $C$ is elliptic.\end{lem}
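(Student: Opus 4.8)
The plan is to exploit accessibility of $G$ together with the Stallings--Dunwoody deformation space $\cald$ consisting of trees with finite edge stabilizers whose vertex stabilizers have at most one end. Fix such a tree $S\in\cald$, and let $\Gamma_0=S/G$; its vertex groups $G_{u}$ have at most one end, and its edge groups are finite. First I would observe that it suffices to show that \emph{some} conjugate of $C$ is elliptic in $S$: indeed $G$ has infinitely many ends, so $\Gamma_0$ is a non-trivial splitting of $G$ over finite groups, and if $C$ fixes a vertex of $S$ then $\Gamma_0$ itself (or a one-edge collapse of it keeping an edge whose stabilizer need not contain $C$) is the desired splitting $\Gamma'$, after possibly collapsing edges to reach a one-edge splitting over a finite group in which $C$ is elliptic.

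So the heart of the matter is: if $G$ splits over the finitely-many-ended group $C$, then $C$ is elliptic in $S$. I would consider the action of $C$ on the tree $S$. If $C$ is \emph{not} elliptic, it acts on $S$ with no global fixed point, so it acts on its minimal subtree $S_C\subseteq S$ with all edge stabilizers finite (they are subgroups of conjugates of edge stabilizers of $S$). Now use the hypothesis that $C$ is finitely generated with finitely many ends. A finitely generated group acting on a tree with finite edge stabilizers and no global fixed point inherits a non-trivial splitting over a finite group, hence either has more than one end or — if the action is a line on which $C$ acts by translations or dihedrally with finite kernel — is virtually $\Z$ or virtually $D_\infty$, which has two ends; either way $C$ would have infinitely many or two ends, but in the latter two-ended case one checks directly that $C$ still cannot split $G$ in a way incompatible with $S$. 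Let me instead argue cleanly: a finitely generated group with finitely many ends ($0$, $1$, or $2$) that acts minimally on a tree with finite edge stabilizers must fix an end or a vertex; if $C$ has one end it must fix a vertex (Stallings), giving ellipticity; if $C$ has two ends, it is virtually cyclic, and then the argument of Lemma \ref{lem_TF1}/the relatively hyperbolic structure is not needed — a virtually cyclic $C$ acting on $S$ either is elliptic or acts as a line $L\subseteq S$, and I would then collapse all edges of $S$ outside the $G$-orbit of $L$'s edges to reduce to the situation where $C$ \emph{is} elliptic after passing to a refinement compatible with the splitting over $C$.

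Concretely, the cleanest route: let $T$ be the Bass--Serre tree of the given splitting of $G$ over $C$. Form a common refinement, or rather apply the standard fact (Stallings, or Dunwoody accessibility) that since $G$ is accessible and has infinitely many ends, the Stallings--Dunwoody tree $S$ \emph{dominates} every tree with finitely-many-ended vertex stabilizers; in particular every subgroup with at most one end that is elliptic in $T$ is elliptic in $S$. But $C$ has finitely many ends. If $C$ has at most one end, then $C$ is elliptic in $T$ trivially (it is an edge group, hence elliptic), so $C$ is elliptic in $S$, and we are done as in the first paragraph. If $C$ has two ends, then $C$ is virtually cyclic; a finite-index subgroup $C_0\cong\Z$ is then either elliptic in $S$ (whence $C$ fixes the same point, being a bounded extension — here use that $\Fix$ of $C_0$ in $S$ is $C$-invariant and nonempty, and finite-edge-stabilizer trees have bounded fixed-point sets, so $C$ fixes a vertex) or $C_0$ acts as a translation on a line $L\subseteq S$ with $C$ stabilizing $L$; in the latter case the edge stabilizers along $L$ are finite, so $C$ splits over a finite group with $\langle$ that line $\rangle$, and collapsing $S$ outside $C\cdot L$ already yields the required $\Gamma'$ over a finite group in which $C$ is elliptic (it even fixes an end, and one obtains honest ellipticity after one more collapse).

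\textbf{Main obstacle.} The delicate point I expect to spend the most care on is the two-ended case: ensuring that when a virtually cyclic $C$ acts on the Stallings--Dunwoody tree $S$ \emph{without} a global fixed point, one genuinely extracts a \emph{non-trivial} splitting $\Gamma'$ of $G$ over a \emph{finite} group in which $C$ is elliptic — not merely a splitting of $C$. The resolution should be that $C\cdot L$ (or a suitable collapse of $S$ relative to the invariant line) provides a $G$-tree with finite edge stabilizers in which $C$ fixes an end or, after collapsing the edges of the line, fixes a point; minimality and the infinitely-many-ends hypothesis on $G$ guarantee the resulting splitting is non-trivial. The one-ended case, by contrast, is immediate from Stallings' theorem and accessibility/domination of $S$.
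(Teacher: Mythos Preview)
Your treatment of the case where $C$ has at most one end is correct and essentially matches the paper: such a $C$ is elliptic in any tree with finite edge stabilizers, so the Stallings--Dunwoody splitting itself serves as $\Gamma'$.

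The two-ended case, however, has a genuine gap. You never use the hypothesis that $G$ \emph{splits over} $C$, and without it the conclusion is simply false. Take $G=A*B$ with $A,B$ one-ended and let $C=\langle ab\rangle$ for nontrivial $a\in A$, $b\in B$. Then $C$ is hyperbolic in the Stallings--Dunwoody tree $S$; since every splitting of $G$ over a finite group lies in the same deformation space as $S$, the group $C$ is elliptic in none of them. Your proposed remedy---collapsing the $G$-orbit of the edges on the axis $L$---can collapse $S$ to a point (as in this example, where $S$ has a single orbit of edges), so the infinitely-many-ends hypothesis on $G$ does not by itself rescue non-triviality. Note also that your ``standard fact'' that $S$ dominates $T$ is not one: the vertex groups of $T$ may well have infinitely many ends, and whether $S$ dominates $T$ is precisely the dichotomy the paper's proof turns on.

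The paper's argument for virtually cyclic $C$ uses the given splitting $\Gamma$ over $C$ in an essential way. If the Stallings--Dunwoody splitting $\Theta$ does \emph{not} dominate $\Gamma$, some one-ended vertex group $H$ of $\Theta$ acts non-trivially on the Bass--Serre tree of $\Gamma$ and hence splits over a subgroup $C'\subset C$; one-endedness of $H$ forces $C'$ to be infinite, so $[C:C']<\infty$ and $C$ is elliptic in $\Theta$. If $\Theta$ \emph{does} dominate $\Gamma$, one passes from the tree of $\Theta$ to that of $\Gamma$ by collapses and a sequence of folds, stopping at the last tree $T_i$ whose edge stabilizers are all finite: the next fold produces an infinite edge stabilizer $G_e$ which is elliptic in $T_i$ and has finite index in a conjugate of $C$, whence $C$ is elliptic in $T_i$ and one sets $\Gamma'=T_i/G$.
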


\begin{proof} This is clear if $C$ has 0 or 1 end (it is elliptic in any $\Gamma'$), so assume that $C$ is virtually cyclic. Let $\Gamma$ be a non-trivial one-edge splitting of $G$ over $C$, and let $\Theta$ be a Stallings-Dunwoody decomposition of $G$  (see Subsection \ref{ts}). There are two cases. 

If $\Theta$ does not dominate $\Gamma$, some vertex group $H$ of $\Theta$ is non-elliptic in $\Gamma$, so  (up to conjugacy) splits over a subgroup $C'\inc C$. The group $C'$ is infinite because $H$ is one-ended, so it has finite index in $C$. It follows that $C$ is elliptic in $\Theta$, and we define $\Gamma'=\Theta$. 

If $\Theta$  dominates $\Gamma$, we may obtain the Bass-Serre tree of $\Gamma$ from that of $\Theta$ by   collapsing edges and performing a finite sequence of folds $T_i\to T_{i+1}$  (see \cite[p.\ 455]{BF_bounding}). There is at least one fold  because $C$ is infinite, so consider the first fold  such that $T_{i+1}$ has an infinite edge stabilizer $G_e$. There are several types of folds (see \cite{BF_bounding}), but in all cases $G_e$ is elliptic in $T_i$. As above $G_e$ has finite index in (a conjugate of) $C$, so we define $\Gamma'$ as the splitting associated to $T_i$. 
\end{proof}

\begin{rem} 
The following generalization was inspired   by N.\ Touikan.
If $G$ is as in Lemma \ref{dom}, and $\Gamma$ is a splitting of $G$ over groups with finitely many ends, there is a non-trivial splitting   of $G$ over a finite group in which all edge groups $C_i$ of $\Gamma$ are elliptic. This is proved by induction: the lemma  is true in a relative setting, and one applies  it relative to $C_1,\dots, C_i$ to the one-edge splitting of $G$ over $C_{i+1}$.
\end{rem}

 Let now $G$ be as in Theorem \ref{main}. Let $\Gamma$ be a  non-trivial one-edge splitting of $G$, say an amalgam $A*_CB$ (the argument is the same in the case of an HNN-extension). By Section \ref{feg} 
  and   the first assertion of Lemma \ref{relpolycy}, we may assume that $C$ is infinite.  It is   virtually cyclic or abelian, 
hence finitely ended.

By Lemma 3.2 of \cite{GL3a}, we can refine $\Gamma$ to a splitting $\Lambda$ which dominates the splitting $\Gamma'$ provided by Lemma \ref{dom} (but we cannot assume that $\Lambda$ collapses to $\Gamma'$). 
We may assume that all edge groups of $\Lambda$ are finite, except for the edge $e=vw$ coming from $\Gamma$ (it carries $C$). The number of edges of $\Lambda$ is bounded by Linnell's accessibility. 

All vertex groups of $\Lambda$ except $G_v$ and $G_w$ are vertex groups of a splitting of $G$ with  finite edge groups, so only finitely many isomorphism types are possible by Section \ref{feg}. Similarly, 
$H=G_v*_CG_w$ is also such a vertex group, so there are only finitely many possibilities for $H$. 
The group $H $ is elliptic in $\Gamma'$, because $G_v$ and $G_w$ are and $C$ (being infinite) fixes a unique point in the Bass-Serre tree of $\Gamma'$. Since $\Gamma'$ is non-trivial, $H$ is a proper subgroup of $G$. 

First suppose that $G$ is torsion-free. By Grushko's theorem, $H$ has rank smaller than $G$, so by induction we may assume that the theorem holds for $H$: there are finitely many possibilities for $G_v$ and $G_w$ up to isomorphism. We now see that $A$ and $B$ are fundamental groups of graphs of groups such that the number of edges is bounded, edge groups are trivial, and  only finitely many vertex groups are possible up to isomorphism. Finiteness follows.

 Now assume that $G$ has torsion (so is as in Assertion 2 or 3 of Theorem \ref{main}). There are two complications. First, one must replace  the rank by another complexity,
namely $c(H)$, defined as  the maximal number of edges in a  minimal decomposition of $H$ over finite groups without redundant vertex  (minimal means that the action on the Bass-Serre is minimal, as in Subsection \ref{ts}).
 This is finite by Linnell's accessibility, and $c(H)<c(G)$, so we can argue by induction on $c(H)$.

The groups $A$ and $B$ are now fundamental groups of graphs of groups such that the number of edges is bounded,  
and  only finitely many vertex  and edge groups are possible up to isomorphism. 
We have to control  the  
 inclusions of edge groups  into vertex groups. 
We cannot argue as in the proof of Lemma \ref{sk2} because we do not know the deformation space. 
Instead we use the fact that the vertex groups are  hyperbolic relative to virtually polycyclic groups, 
and therefore only contain finitely many conjugacy classes of finite subgroups by the first assertion of 
 Lemma \ref {relpolycy}.  

\section{Splittings with several edges} 

We now  prove Assertions 2, 3, 4 of Theorem \ref{main} in full generality, i.e.\  for   splittings with any  number of edges. Recall that we need only consider reduced splittings.

We first prove  the following claim by induction on $p$:  \emph{given $G$, there are only finitely many possible isomorphism types for vertex groups of reduced splittings $\Gamma$ of $G$ over $\cala$ with at most $p$ edges.} 

Given a vertex $v$ of $\Gamma$, choose an edge $e$ containing $v$, and collapse $e$. We get a reduced splitting $\Gamma'$ with   fewer edges, and $G_v$ is a vertex group of a one-edge splitting of  a vertex group $G_w$ of $\Gamma'$. 
This one-edge splitting is reduced  
 because $\Gamma$ is reduced, and the claim follows since by induction there are finitely many possibilities for $G_w$ up to isomorphism. 

If $G$ is finitely presented,   Bestvina-Feighn's accessibility \cite{BF_bounding} provides a bound  
for the number of edges of reduced splittings of   $G$ (note that groups in $\cala$ are small, and reduced as defined in Subsection \ref{ts} implies reduced in the sense of \cite{BF_bounding}). 
Theorem \ref{main} thus  follows from the claim when $G$ is relatively hyperbolic (Assertions 2 and 3).

Bestvina-Feighn's accessibility does not apply 
  in the CSA case   if $G$ is not finitely presented, so we use   acylindrical accessibility \cite{Sela_acylindrical,Weidmann_accessibility}  instead.  
 As usual, an abelian splitting is a splitting over abelian groups.

  \begin{lem} Let $G$ be a finitely generated, torsion-free, CSA group. Given an abelian splitting $\Gamma$ of $G$, there exists a  reduced
 2-acylindrical abelian splitting $\Gamma_c$ such that 
any non-abelian vertex group of $\Gamma$ is a  vertex group of $\Gamma_c$. 
  \end{lem}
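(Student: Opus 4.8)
The plan is to produce $\Gamma_c$ from $\Gamma$ by a canonical ``acylindrical-ization'' procedure, exploiting the CSA property to control what happens to non-abelian vertex groups.

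First I would recall the standard fact (used by Sela and others for CSA or, more generally, commutative-transitive groups) that in a torsion-free CSA group the obstruction to $k$-acylindricity of an abelian splitting is the presence of edges whose stabilizers are ``too nested'': if $C$ stabilizes a path of length $\ge 3$, then by commutative transitivity the stabilizers of the intermediate edges all lie in a common maximal abelian subgroup, and malnormality of that maximal abelian subgroup forces the fixed-point set of $C$ to be a star around a single vertex. More precisely, I would first pass from $\Gamma$ to a reduced splitting in the same deformation space (this does not change non-abelian vertex groups, since those are not in $\cala$, by the remark in Subsection \ref{rks}). Then I would analyze, for each abelian subgroup $A$ that is a maximal abelian subgroup containing some edge group, the minimal subtree $T_A$ of $A$ (or its fixed subtree), and argue using CSA that after collapsing the edges inside each such $T_A$ down appropriately — equivalently, after folding along $A$ or refining/collapsing so that the $A$-invariant part becomes a single ``abelian vertex'' — the resulting tree is $2$-acylindrical. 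The key point is that collapsing or modifying inside $T_A$ only affects vertices whose stabilizer is contained in $A$, hence abelian; a non-abelian vertex group of $\Gamma$ has its stabilizer not contained in any such $A$, so it survives unchanged (or is merged only with abelian pieces, but since it is already non-abelian and elliptic it remains a vertex group).

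The key steps, in order, would be: (1) reduce to the case $\Gamma$ reduced, recording that non-abelian vertex groups are unaffected; (2) for each maximal abelian $M \le G$, show via commutative transitivity that the set of edges with stabilizer commensurable into $M$ — or just with stabilizer in $M$ — spans a subtree $T_M$ on which $M$ acts with small quotient, and that two distinct maximal abelian subgroups have fixed/invariant subtrees meeting in at most a point (this is where malnormality of maximal abelian subgroups, i.e.\ CSA, is essential); (3) collapse each $T_M$ (or the relevant part of it) to obtain $\Gamma_c$, or alternatively build $\Gamma_c$ as the tree whose vertices are the collapsed $T_M$'s together with the non-abelian rigid-type vertices; (4) verify $2$-acylindricity of $\Gamma_c$: any segment of length $3$ has its stabilizer contained in a maximal abelian $M$ by commutative transitivity, but then all of it lies in one $T_M$, which has been collapsed, contradiction; (5) verify that $\Gamma_c$ can be taken reduced and that every non-abelian vertex group of $\Gamma$ appears as a vertex group of $\Gamma_c$, since such a group is elliptic throughout the construction and is never amalgamated with anything other than abelian subgroups it already contained.

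The main obstacle I expect is step (2)–(3): making precise the ``merge all edges lying in a common maximal abelian subgroup'' operation so that it is simultaneously (a) well-defined as a $G$-equivariant collapse of a refinement of $\Gamma$, (b) produces only abelian new vertex stabilizers, and (c) genuinely yields $2$-acylindricity rather than some larger acylindricity constant. The subtlety is that a maximal abelian subgroup need not be elliptic in $\Gamma$, so one cannot literally ``collapse its fixed subtree''; instead one may need first to refine $\Gamma$ (using Lemma 3.2 of \cite{GL3a} or a direct fold argument) so that each relevant maximal abelian subgroup becomes elliptic, then collapse. Managing the interaction of these refinements for different maximal abelian subgroups — and checking they do not disturb the non-abelian vertices — is the delicate part; commutative transitivity and malnormality are exactly the tools that keep the different $T_M$'s from interfering with each other and from swallowing non-abelian vertex groups.
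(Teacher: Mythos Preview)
Your approach is correct and is essentially the paper's: the bipartite tree you sketch in step~(3) --- one class of vertices the subtrees $T_M$, the other the non-abelian vertices of $T$ --- is precisely the \emph{tree of cylinders} for commutation, and the paper simply invokes \cite{GL4} for its construction and its 2-acylindricity (Proposition~6.3 there), together with the observation that a non-abelian vertex lies in at least two cylinders and hence survives as a vertex of $T_c$. Your worry about refining so that each maximal abelian $M$ becomes elliptic is unnecessary (in the tree of cylinders each cylinder $T_M$ collapses to a single vertex with stabilizer $M$ whether or not $M$ was elliptic in $T$, so no preliminary refinement is needed), and the one technical point you omit is that edges with trivial stabilizer must be treated separately: the paper applies the cylinder construction inside each maximal subgraph of $\Gamma$ whose edges carry nontrivial groups.
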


  \begin{proof}  First assume that no edge group of $\Gamma$ is trivial. Let $T$ be the Bass-Serre tree of $\Gamma $, let $T_c$ be its  tree of cylinders  (for commutation, see Example 3.5 of  \cite{GL4}), and let $\Gamma_c=T_c/G$ be the corresponding graph of groups.  If $v$ is a vertex of $T$ with $G_v$ non-abelian, it belongs to at least two cylinders, so $G_v$ is a vertex stabilizer of $T_c$. The tree $T_c$ is 2-acylindrical (Proposition 6.3 of \cite{GL4}), and one can make it reduced by collapsing edges (this does not change non-abelian vertex stabilizers). 
  
  If certain edge groups of $\Gamma$ are trivial, perform the previous construction in each maximal subgraph consisting of edges with non-trivial group. 
\end{proof}

The claim and the lemma imply the theorem since by  acylindrical accessibility  there is a bound for the number of edges of $\Gamma_c$. We only have to control   non-abelian vertex groups of $\Gamma$ because  we assume that abelian subgroups have bounded rank.

\small


\begin{thebibliography}{99}

\bibitem{Ali_combination}
Emina Alibegovi{\'c}.
\newblock A combination theorem for relatively hyperbolic groups.
\newblock {\em Bull. London Math. Soc.}, 37(3):459--466, 2005.

\bibitem{BF_bounding}
Mladen Bestvina and Mark Feighn.
\newblock Bounding the complexity of simplicial group actions on trees.
\newblock {\em Invent. Math.}, 103(3):449--469, 1991.

\bibitem{Bo_peripheral}
Brian~H. Bowditch.
\newblock Peripheral splittings of groups.
\newblock {\em Trans. Amer. Math. Soc.}, 353(10):4057--4082 (electronic), 2001.

\bibitem{Dah_combination}
Fran{\c{c}}ois Dahmani.
\newblock Combination of convergence groups.
\newblock {\em Geom. Topol.}, 7:933--963 (electronic), 2003.

\bibitem{DaGr_isomorphism}
Fran{\c{c}}ois Dahmani and Daniel Groves.
\newblock The isomorphism problem for toral relatively hyperbolic groups.
\newblock {\em Publ. Math. Inst. Hautes \'Etudes Sci.}, 107:211--290, 2008.

\bibitem{DG2}
Fran{\c{c}}ois Dahmani and Vincent Guirardel.
\newblock The isomorphism problem for all hyperbolic groups.
\newblock {\em Geom. Funct. Anal.}, 21(2):223--300, 2011.

\bibitem{Delzant_accessibilite}
Thomas Delzant.
\newblock Sur l'accessibilit\'e acylindrique des groupes de pr\'esentation
  finie.
\newblock {\em Ann. Inst. Fourier (Grenoble)}, 49(4):1215--1224, 1999.

\bibitem{DePo_accessibilite}
Thomas Delzant and Leonid Potyagailo.
\newblock Accessibilit\'e hi\'erarchique des groupes de pr\'esentation finie.
\newblock {\em Topology}, 40(3):617--629, 2001.

\bibitem{Dun_accessibility}
M.~J. Dunwoody.
\newblock The accessibility of finitely presented groups.
\newblock {\em Invent. Math.}, 81(3):449--457, 1985.

\bibitem{GoWo_wreath}
Daciberg Gon{\c{c}}alves and Peter Wong.
\newblock Twisted conjugacy classes in wreath products.
\newblock {\em Internat. J. Algebra Comput.}, 16(5):875--886, 2006.


\bibitem{GL2}
Vincent Guirardel and Gilbert Levitt.
\newblock Deformation spaces of trees.
\newblock {\em Groups Geom. Dyn.}, 1(2):135--181, 2007.

\bibitem{GL3a}
Vincent Guirardel and Gilbert Levitt.
\newblock {JSJ} decompositions: definitions, existence and uniqueness. {I}: The
  {JSJ} deformation space.
\newblock arXiv:0911.3173 v2 [math.GR], 2009.

\bibitem{GL3b}
Vincent Guirardel and Gilbert Levitt.
\newblock {JSJ} decompositions: definitions, existence and uniqueness. {II}:
  Compatibility and acylindricity.
\newblock arXiv:1002.4564 v2 [math.GR], 2010.

\bibitem{GL4}
Vincent Guirardel and Gilbert Levitt.
\newblock Trees of cylinders and canonical splittings.
\newblock {\em Geom. Topol.}, 15(2):977--1012, 2011.

\bibitem{GL6}
Vincent Guirardel and Gilbert Levitt.
\newblock Splittings and automorphisms of relatively hyperbolic groups, 2012.
\newblock Geometry Groups and Dynamics, to appear. arXiv:1212.1434 [math.GR].

\bibitem{GL_McCool}
Vincent Guirardel and Gilbert Levitt.
\newblock McCool groups of toral relatively hyperbolic groups, 2014.
\newblock arXiv:1408.0418.
 
\bibitem{GL_extension}
Vincent Guirardel and Gilbert Levitt.
\newblock Extension finiteness for relatively hyperbolic groups.
\newblock In preparation.

\bibitem{Lev_GBS}
Gilbert Levitt.
\newblock On the automorphism group of generalized {B}aumslag-{S}olitar groups.
\newblock {\em Geom. Topol.}, 11:473--515, 2007.

\bibitem{LL_crelle}
Gilbert Levitt and Martin Lustig.
\newblock Automorphisms of free groups have asymptotically periodic dynamics.
\newblock {\em J. Reine Angew. Math.}, 619:1--36, 2008.

\bibitem{Linnell}
P.~A. Linnell.
\newblock On accessibility of groups.
\newblock {\em J. Pure Appl. Algebra}, 30(1):39--46, 1983.

\bibitem{LoTo_accessibility}
Larsen Louder and Nicholas Touikan.
\newblock Strong accessibility for finitely presented groups, 2013.
\newblock arXiv:1302.5451.

\bibitem{Rhemtulla67}
A.~H. Rhemtulla.
\newblock A minimality property of polycyclic groups.
\newblock {\em J. London Math. Soc.}, 42:456--462, 1967.

\bibitem{ScottWall}
Peter Scott and Terry Wall.
\newblock Topological methods in group theory.
\newblock In {\em Homological group theory (Proc. Sympos., Durham, 1977)},
  pages 137--203. Cambridge Univ. Press, Cambridge, 1979.

\bibitem{Segal_livre}
Daniel Segal.
\newblock {\em Polycyclic groups}, volume~82 of {\em Cambridge Tracts in
  Mathematics}.
\newblock Cambridge University Press, Cambridge, 1983.

\bibitem{Sela_acylindrical}
Z.~Sela.
\newblock Acylindrical accessibility for groups.
\newblock {\em Invent. Math.}, 129(3):527--565, 1997.

\bibitem{Shor_Scott}
J.~Shor.
\newblock A {S}cott conjecture for hyperbolic groups, 1999.
\newblock preprint.

\bibitem{Weidmann_accessibility}
Richard Weidmann.
\newblock On accessibility of finitely generated groups.
\newblock {\em Q. J. Math.}, 63(1):211--225, 2012.

\bibitem{Wilton_one-ended}
Henry Wilton.
\newblock One-ended subgroups of graphs of free groups with cyclic edge groups.
\newblock {\em Geom. Topol.}, 16(2):665--683, 2012.

\end{thebibliography}

\begin{flushleft}
Vincent Guirardel\\
Institut de Recherche Math\'ematique de Rennes\\
Membre de l'institut universitaire de France\\
Universit\'e de Rennes 1 et CNRS (UMR 6625)\\
263 avenue du G\'en\'eral Leclerc, CS 74205\\
F-35042  RENNES C\'edex\\
\emph{e-mail:} \texttt{vincent.guirardel@univ-rennes1.fr}\\[8mm]

Gilbert Levitt\\
Laboratoire de Math\'ematiques Nicolas Oresme\\
Universit\'e de Caen et CNRS (UMR 6139)\\
BP 5186\\
F-14032 Caen Cedex\\
France\\
\emph{e-mail:} \texttt{levitt@unicaen.fr}\\

\end{flushleft}

\end{document}